\numberwithin{equation}{section}
\newtheorem{theorem}{Theorem}[section]
\newtheorem{definition}{Definition}[section]
\newtheorem{lemma}[theorem]{Lemma}
\newtheorem{proposition}[theorem]{Proposition}
\newtheorem{corollary}[theorem]{Corollary}
\begin{document}
\begin{center}
{\Large{\textbf{{Characteristics of Finite Jaco Graphs, $J_n(1), n\in \Bbb N$}}}} 
\end{center}
\vspace{0.5cm}
\large{\centerline{(Johan Kok, Paul Fisher, Bettina Wilkens, Mokhwetha Mabula, Vivian Mukungunugwa)\footnote {\textbf {Affiliation of authors:}\\
\noindent Johan Kok (Tshwane Metropolitan Police Department), City of Tshwane, Republic of South Africa\\
e-mail: kokkiek2@tshwane.gov.za\\ \\
\noindent Paul Fisher (Department of Mathematics, University of Botswana), City of Gaborone, Republic of Botswana\\
e-mail: paul.fisher@mopipi.ub.bw\\ \\
\noindent Bettina Wilkens (Department of Mathematics, University of Botswana), City of Gaborone, Republic of Botswana \\
e-mail: wilkensb@mopipi.ub.bw\\ \\
\noindent Mokhwetha Mabula (Department of Mathematics and Applied Mathematics, University of Pretoria), City of Tshwane, Republic of South Africa\\ 
e-mail: mokhwetha.Mabula@up.ac.za\\ \\
\noindent Vivian Mukungunugwa (Department of Mathematics, University of Zimbabwe), City of Harare, Republic of Zimbabwe\\
e-mail: vivianm@maths.uz.ac.zw}}
\vspace{0.5cm}
\begin{abstract}
\noindent We introduce the concept of a family of finite directed graphs (\emph{order 1}) which are directed graphs derived from a infinite directed graph (\emph{order 1}), called the \emph{1}-root digraph. The \emph{1}-root digraph has four fundamental properties which are; $V(J_\infty(1)) = \{v_i|i \in \Bbb N\}$ and, if $v_j$ is the head of an edge (arc) then the tail is always a vertex $v_i, i<j$ and, if $v_k,$ for smallest $k \in \Bbb N$ is a tail vertex then all vertices $v_ \ell, k< \ell<j$ are tails of arcs to $v_j$ and finally, the degree of vertex $k$ is $d(v_k) = k.$ The family of finite directed graphs are those limited to $n \in \Bbb N$ vertices by lobbing off all vertices (and edges arcing to vertices) $v_t, t > n.$ Hence, trivially we have $d(v_i) \leq i$ for $i \in \Bbb N.$ We present an interesting Fibonaccian-Zeckendorf result and present the Fisher Algorithm to table particular values of interest. It is meant to be an \emph {introductory paper} to encourage exploratory research.
\end{abstract}
\noindent {\footnotesize \textbf {Keywords:} Jaco graph, Directed graph, Jaconian vertex, Jaconian set, Number of edges, Shortest path, Fisher Algorithm, Zeckendorf representation}\\ \\
\noindent {\footnotesize \textbf {AMS Classification Numbers:} 05C07, 05C12, 05C20, 11B39} 
\section{Introduction}
\noindent We introduce the concept of a family of finite Jaco Graphs (\emph{order 1}) which are directed graphs derived from the infinite Jaco Graph (\emph{order 1}), called the \emph{1}-root digraph. The \emph{1}-root digraph has four fundamental properties which are; $V(J_\infty(1)) = \{v_i|i \in \Bbb N\}$ and, if $v_j$ is the head of an edge (arc) then the tail is always a vertex $v_i, i<j$ and, if $v_k,$ for smallest $k \in \Bbb N$ is a tail vertex then all vertices $v_ \ell, k< \ell<j$ are tails of arcs to $v_j$ and finally, the degree of vertex $k$ is $d(v_k) = k.$
\begin{definition}
The infinite Jaco Graph $J_\infty(1)$ is defined by $V(J_\infty(1)) = \{v_i| i \in \Bbb N\}$, $E(J_\infty(1)) \subseteq \{(v_i, v_j)| i, j \in \Bbb N, i< j\}$ and $(v_i,v_ j) \in E(J_\infty(1))$ if and only if $2i - d^-(v_i) \geq j.$
\end{definition}
\begin{definition}
The family of finite Jaco Graphs are defined by $\{J_n(1) \subseteq J_\infty(1)|n\in \Bbb {N}\}.$ A member of the family is referred to as the Jaco Graph, $J_n(1).$
\end{definition} 
\begin{definition}
The set of vertices attaining degree $\Delta (J_n(1))$ is called the Jaconian vertices of the Jaco Graph $J_n(1),$ and denoted, $\Bbb{J}(J_n(1))$ or, $\Bbb{J}_n(1)$ for brevity.
\end{definition}
\begin{definition}
The lowest numbered (indiced) Jaconian vertex is called the prime Jaconian vertex of a Jaco Graph.
\end{definition}
\begin{definition}
If $v_i$ is the prime Jaconian vertex of a Jaco Graph $J_n(1)$, the complete subgraph on vertices $v_{i+1}, v_{i+2}, \cdots,v_n$ is called the Hope subgraph of a Jaco Graph and denoted,  $\Bbb{H}(J_n(1))$ or, $\Bbb{H}_n(1)$ for brevity.
\end{definition}
\begin{definition} 
If, in applying definition 1.1 to vertex $v_i$  (not necessarily exhaustively), or for logical method of proof we have the edge $(v_i, v_k)$ linked in a Jaco Graph $J_n(1),$ then the degree vertex $v_i$ attains at $v_k$ is called the, "$\emph{at degree of $v_i$}$ at $v_k$", and is denoted, $d^*(v_i)@v_k.$
\end{definition}
\begin{definition}
In $J_\infty(1)$ we have $n = d^+(v_n) + d^-(v_n)$ whilst in $J_n(1)$ we have $d(v_i) = \lceil d^+(v_i)\rceil + d^-(v_i), i\leq n.$ 
\end{definition}
\noindent {\bf Property 1:} From the definition of a Jaco Graph $J_n(1),$ it follows that for the prime Jaconian vertex $v_i,$ we have $ d(v_m)= m$ for all $m\in\{1,2,3,\cdots,i\}.$\\ \\
{\bf Property 2:} From the definition of a Jaco Graph $J_n(1),$ it follows that $\Delta (J_k(1))\leq \Delta (J_n(1))$ for all $k\leq n.$\\ \\
{\bf Property 3:} The $d^-(v_k)$ for any vertex $v_k$ of a Jaco Graph $J_n(1),~ n\geq k$ is equal to $d(v_k)$ in the underlying Jaco Graph $J_k(1).$\\ \\

\begin{lemma}
If in a Jaco Graph $J_n(1),$ and for smallest $i$ with $d(v_i) = i,$ the edge $(v_i, v_n)$ is defined, then $v_i$ is the prime Jaconian vertex of $J_n(1).$
\end{lemma}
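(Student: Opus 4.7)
The plan is to show that $v_i$ attains the maximum degree of $J_n(1)$ and that no vertex of smaller index matches it. First I combine the two hypotheses into one equality. By Definition~1.1 the out-arcs of $v_i$ that are present in $J_n(1)$ are $(v_i,v_{i+1}),\dots,(v_i,v_{\min(2i-d^-(v_i),\,n)})$, so $d(v_i)=i$ is equivalent to $2i-d^-(v_i)\le n$, while $(v_i,v_n)\in E(J_n(1))$ is the reverse inequality $2i-d^-(v_i)\ge n$. Hence $2i-d^-(v_i)=n$, equivalently $i-d^-(v_i)=n-i$. Writing $k_j$ for the smallest index of a tail of $v_j$ in $J_\infty(1)$, the contiguity of tail sets (the third of the four fundamental properties of the $1$-root digraph) gives $d^-(v_j)=j-k_j$; specialising at $j=i$ yields $k_i=n-i$.

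Next I bound the degrees of all other vertices. For $m<i$ the trivial inequality $d(v_m)\le m\le i-1$ immediately gives $d(v_m)<i=d(v_i)$, so no earlier vertex can rival $v_i$; notice that the minimality of $i$ stated in the hypothesis is not actually used here. For $i<m\le n$, observe that the predicate ``$2k-d^-(v_k)\ge j$'' becomes strictly harder to meet as $j$ grows, so $j\mapsto k_j$ is non-decreasing; in particular $k_m\ge k_i=n-i$. Combining the consequent bound $d^-(v_m)=m-k_m\le m-(n-i)$ with the trivial bound $d^+(v_m)\le n-m$ gives
\[
d(v_m)\;=\;d^-(v_m)+d^+(v_m)\;\le\;\bigl(m-(n-i)\bigr)+(n-m)\;=\;i.
\]
Thus $v_i$ attains the maximum degree $i$ in $J_n(1)$ and is the smallest-indexed vertex to do so, which is by definition the prime Jaconian vertex of $J_n(1)$.

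The main obstacle is the first step: recognising that the two hypotheses together are exactly equivalent to $2i-d^-(v_i)=n$, and reading this off via the contiguity axiom as $k_i=n-i$. Once this identification is in hand, both degree estimates reduce to the monotonicity of the smallest-tail index $k_j$ together with the elementary upper bounds on $d^-$ and $d^+$.
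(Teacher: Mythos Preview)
Your proof is correct and follows the same overall strategy as the paper's: show $d(v_m)<i$ for $m<i$ and $d(v_m)\le i$ for $m>i$, whence $v_i$ has maximum degree and smallest index among such vertices. The paper's proof simply asserts these two inequalities, whereas you supply the mechanism behind the second one---extracting the equality $2i-d^-(v_i)=n$ from the hypotheses, recasting it as $k_i=n-i$, and invoking the monotonicity of $j\mapsto k_j$---so your argument is a genuine fleshing-out rather than a different route; your side remark that the minimality of $i$ is not actually needed is also correct and worth noting.
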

\begin{proof}
If by definition 1.1 and for smallest $i$ with $d(v_i) = i$ the edge $(v_i, v_n)$ is defined, we have in the underlying graph of $J_n(1)$ that $d(v_j) \leq d(v_i)$ for all $j > i$. We also have that $d(v_s) < d(v_i), s< i.$ So it follows that $d(v_i) = \Delta(J_n(1))$ hence by definition 1.4 the vertex $v_i$ is the prime Jaconian vertex of $J_n(1).$ 
\end{proof}
\begin{lemma}
For all Jaco Graphs $J_n(1),~n\geq2$ and, $v_i, v_{i-1}\in V(J_n(1))$ we have that in the underlying graph $|(d(v_i) - d(v_{i-1})|\leq 1.$ 
\end{lemma}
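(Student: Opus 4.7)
The plan is to relate the degree in $J_n(1)$ to the in-degree $d^-(v_i)$, which by Property~3 coincides with its value in $J_\infty(1)$. By Definition~1.1 the out-neighbours of $v_i$ in $J_\infty(1)$ form the consecutive block $v_{i+1},\dots,v_{2i-d^-(v_i)}$, so truncation at index $n$ yields the identity
\begin{equation*}
d(v_i)=\min\!\bigl(i,\; n-i+d^-(v_i)\bigr)\quad\text{in }J_n(1).
\end{equation*}

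The crux is to establish the slow growth
\begin{equation*}
d^-(v_{i+1})-d^-(v_i)\in\{0,1\}\qquad(i\ge 1).
\end{equation*}
By the consecutive-tail clause of Definition~1.1, the in-neighbours of $v_j$ form a block $\{v_{k_j},v_{k_j+1},\dots,v_{j-1}\}$, so $d^-(v_j)=j-k_j$; it is therefore enough to show $k_{j+1}\in\{k_j,k_j+1\}$. The inequality $k_{j+1}\ge k_j$ is immediate, since any tail of $v_{j+1}$ has reach $2k-d^-(v_k)\ge j+1>j$ and hence is a tail of $v_j$. The inequality $k_{j+1}\le k_j+1$ I would prove by induction on $j$: the inductive hypothesis gives $d^-(v_{k_j+1})\le d^-(v_{k_j})+1$, and combined with $2k_j-d^-(v_{k_j})\ge j$ this produces $2(k_j+1)-d^-(v_{k_j+1})\ge j+1$, exhibiting $v_{k_j+1}$ as a tail of $v_{j+1}$.

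With the slow growth of $d^-$ established, set $f(i)=2i-d^-(v_i)-n$, so that $d(v_i)=i-\max(0,f(i))$. Then $f(i)-f(i-1)=2-(d^-(v_i)-d^-(v_{i-1}))\in\{1,2\}$, so $f$ is strictly increasing. A short case split on the signs of $f(i-1)$ and $f(i)$ then yields $d(v_i)-d(v_{i-1})\in\{-1,0,1\}$ in each of the three feasible sign configurations (the fourth, $f(i-1)>0\ge f(i)$, is excluded by monotonicity of $f$).

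The principal obstacle is the inductive step $k_{j+1}\le k_j+1$, which requires careful handling of boundary cases where $v_j$ may have few in-neighbours; once that is secured, the truncation analysis is mechanical.
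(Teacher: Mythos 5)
Your proposal is correct, but it takes a genuinely different route from the paper. The paper argues structurally through the prime Jaconian vertex $v_k$ and the Hope subgraph: below $v_k$ the degrees are exactly $d(v_m)=m$, so consecutive differences are $1$, and above $v_k$ it uses the fact that the Hope subgraph is complete together with the consecutive-block structure of the edges running back to low-index vertices to conclude that consecutive degrees there differ by at most one; the argument is short but rather informal. You instead work quantitatively: you derive the closed truncation formula $d(v_i)=\min\bigl(i,\,n-i+d^-(v_i)\bigr)$ from the interval structure of out-neighbourhoods in $J_\infty(1)$, reduce the lemma to the Lipschitz property $d^-(v_{i+1})-d^-(v_i)\in\{0,1\}$ of the in-degree sequence, prove that by induction via the left endpoints $k_j$ of the in-neighbour blocks, and finish with a mechanical sign analysis of $f(i)=2i-d^-(v_i)-n$. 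This buys more: the slow-growth of $d^-$ and the explicit degree formula are reusable facts (they essentially encode Steps 2--3 of the Fisher Algorithm and Property 3), and the final case split is airtight where the paper's Hope-subgraph argument is hand-wavy. One caveat: the "consecutive-tail clause" you invoke is not literally part of Definition 1.1 (which only says $(v_i,v_j)\in E$ iff $2i-d^-(v_i)\ge j$); it is stated as a fundamental property in the introduction and proved as Theorem 2.13. Since the block structure of in-neighbourhoods of $v_{j+1}$ is equivalent to monotonicity of $i\mapsto 2i-d^-(v_i)$ on indices below $j+1$, which is exactly the slow-growth you are proving, you should either cite Theorem 2.13 or fold the block property into the same strong induction to avoid any appearance of circularity; with that arranged, and the small-$j$ boundary cases you already flag, the argument is complete.
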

\begin{proof}
Consider the Jaco Graph $J_n(1),~n\geq2.$ The result is trivially true for all vertices $v_1,v_2,v_3,\cdots,v_k$ if $v_k$ is the prime Jaconian vertex of $J_n(1).$ Now consider the Hope subraph $\Bbb{H}(J_n(1)).$ All vertices of $\Bbb{H}(J_n(1))$ have equal degree so the result holds for the Hope subraph \emph{per se}. Furthermore if a vertex $v_j,~(k+1)\leq j\leq n$ is linked to a vertex $v_t,~1\leq t\leq k$ then all vertices $v_l,~(k+1)\leq l< j$ are linked to $v_t$ which implies 
$|d(v_j)-d(v_l)|=0 <1$ hence $|(d(v_{j+1})-d(v_j)|\leq1.$
\end{proof}
\begin{corollary}
For a Jaco Graph $J_n(1)$ the maximum degree $\Delta (J_n(1))$ might repeat itself  as $n$ increases to $n+1,$ (i.e. $\Delta J_n(1) = \Delta J_{n+1}(1)$) but on an increase of we always obtain $\Delta J_{n+1}(1) = \Delta (J_n(1))+1.$ 
\end{corollary}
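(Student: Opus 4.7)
The plan is to establish the two inequalities $\Delta(J_n(1)) \le \Delta(J_{n+1}(1)) \le \Delta(J_n(1))+1$; together these force $\Delta(J_{n+1}(1))$ to be either $\Delta(J_n(1))$ or $\Delta(J_n(1))+1$, which is exactly the statement. The first inequality is immediate from Property 2, since $J_n(1)$ sits inside $J_{n+1}(1)$ as the sub-digraph induced on $\{v_1,\dots,v_n\}$, so every vertex of $J_n(1)$ has at least as large a degree in $J_{n+1}(1)$.

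For the upper bound I would let $v_p$ denote the prime Jaconian vertex of $J_{n+1}(1)$, so that by Property 1 we have $d_{J_{n+1}(1)}(v_p)=p$ and therefore $\Delta(J_{n+1}(1))=p$. A small but essential sub-step is to verify that $p\le n$: since $v_{n+1}$ is a sink of $J_{n+1}(1)$, its total degree equals its in-degree, which is at most $n$; hence $d_{J_{n+1}(1)}(v_{n+1})\le n<n+1$, so $v_{n+1}$ cannot be the prime Jaconian (the prime Jaconian must have degree equal to its index). Therefore $v_p\in V(J_n(1))$.

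Next I would compare the degree of $v_p$ in the two Jaco graphs. The only arcs that can lie in $E(J_{n+1}(1))\setminus E(J_n(1))$ are those terminating at $v_{n+1}$, since $v_{n+1}$ is the sole new vertex introduced. Consequently at most one edge incident to $v_p$, namely $(v_p,v_{n+1})$ if it happens to exist, is present in $J_{n+1}(1)$ but absent from $J_n(1)$. This yields $d_{J_n(1)}(v_p)\ge d_{J_{n+1}(1)}(v_p)-1=p-1$, and hence $\Delta(J_n(1))\ge p-1=\Delta(J_{n+1}(1))-1$, which is the required upper bound.

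The main obstacle is the sub-step $p\le n$; once that is in place, the rest is a one-line bookkeeping observation about which arcs vanish when $v_{n+1}$ is ``lobbed off''. Neither Lemma 1.2 nor any finer structural input is strictly necessary here — the argument rests only on Property 1 (degrees are saturated up to the prime Jaconian), Property 2 (the maximum degree is non-decreasing in $n$), and the observation that $v_{n+1}$ is a sink with at most $n$ predecessors. I would close by noting that the ``repeat'' alternative genuinely occurs (e.g.\ for small $n$ one can exhibit $\Delta(J_n(1))=\Delta(J_{n+1}(1))$), which justifies the ``might'' in the statement.
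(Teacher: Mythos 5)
Your argument is correct, but it follows a different route from the paper. The paper disposes of this corollary in one line by appealing to Lemma 1.2 (in the underlying graph, consecutive vertices' degrees differ by at most $1$), leaving the link between that lemma and the growth of $\Delta$ as $n$ increases implicit. You instead prove the sandwich $\Delta(J_n(1))\le\Delta(J_{n+1}(1))\le\Delta(J_n(1))+1$ directly: the lower bound is Property 2, and for the upper bound you locate the maximum of $J_{n+1}(1)$ at its prime Jaconian vertex $v_p$, use Property 1 to get $\Delta(J_{n+1}(1))=p$ and to rule out $p=n+1$ (since $v_{n+1}$ is a sink of degree at most $n$), and then observe that $E(J_{n+1}(1))\setminus E(J_n(1))$ consists only of arcs with head $v_{n+1}$, so $v_p$ loses at most one unit of degree when $v_{n+1}$ is removed, giving $\Delta(J_n(1))\ge d_{J_n(1)}(v_p)\ge p-1$. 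All steps check out, granting the paper's unproved Properties 1 and 2, and your "only new arcs point into $v_{n+1}$" observation is exactly right because both graphs are induced subgraphs of $J_\infty(1)$ with all arcs directed from lower to higher index. What your approach buys is a self-contained and fully explicit proof that does not need Lemma 1.2 at all; what the paper's approach buys is brevity and reuse of an already-established structural fact about the degree sequence, at the cost of leaving the deduction to the reader. Your closing remark that the "repeat" case actually occurs (e.g.\ $\Delta(J_2(1))=\Delta(J_3(1))$ would need checking, but small examples such as $\Delta(J_4(1))=\Delta(J_5(1))=2$ do occur) is a sensible addition the paper omits.
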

\begin{proof}
The result follows from Lemma 1.2.
\end{proof}
\section{The Fisher Algorithm for $\{J_i(1), i\in\{4,5,6,\cdots, s\in \Bbb N\}$}
\noindent The family of finite Jaco Graphs are those limited to $n \in \Bbb N$ vertices by lobbing off all vertices (and edges arcing to vertices) $v_t, t > n.$ Hence, trivially we have $d(v_i) \leq i$ for $i \in \Bbb N.$ \\ \\
Column 1 is the map: $\phi(v_i) \rightarrow i, \forall i.$  \\
Column 2 is the in-degree of vertex $v_i.$ \\
Column 3 is the out-degree of vertex $v_i$ in $J_\infty(1).$ \\
Column 4 is the set $\Bbb J(J_i(1)).$\\
Column 5 is $\Delta(J_i(1)).$ \\
Column 6 is the distance $d_{J_i(1)}(v_1,v_i).$\\ \\
\noindent We generally refer to the entries in a row $i$ as: $ent_{1i} =i,$ $ent_{2i} = d^-(v_i),~ent_{3i} = d^+(v_i),~ent_{4i} = \Bbb J(J_i(1)),~ ent_{5i} =\Delta(J_i(1)) ,~ent_{6i} = d_{J_i(1)}(v_1,v_i)$ as interchangeable.\\ \\
Note that rows 1, 2 and 3 follow easily from definition 1.1. \\ \\
Step 0: Set $j = 4$, then set $i = j$ and $s\geq 4.$ \\
Step 1: Set $ent_{1i} = i.$ \\
Step 2: Set $ent_{2i}=ent_{1(i-1)} - ent_{5(i-1)}.$ (Note that $d^-(v_i)=v(\Bbb{H}_{i-1}(1))=(i-1)- \Delta(J_{i-1}(1))).$\\
Step 3: Set $ent_{3i}=ent_{1i} - ent_{2i}.$ (Note that $d^+(v_i)=i-d^-(v_i)).$\\
Step 4: Consider $ent_{4(i-1)}.$  If $ent_{4(i-1)}=\{v_k\},$ set $t=k,$ else set $t=k+1.$\\  
Step 5: Set the prime Jaconian vertex as $v_t$ so $\Bbb{J}(J_i(1))=\{v_t\}$ to begin with. Let $l=t+1,~t+ 2,\cdots,i-1$ and  recursively calculate $i - ent_{1l} + ent_{2l}$. If $i-ent_{1l}+ent_{2l}=t,$ add $v_l$ to the set of Jaconian vertices, else go to Step 6. \\
Step 6: Set $ent_{5i}= t.$ (Note that if $\Bbb J_i(1) =\{v_t, v_{t+1}, .. v_\ell\},$ then, $\Delta(J_i(1))= t).$ \\
Step 7: Select smallest $k$ such that, $k+ ent_{3k}\geq i$ then set $ent_{6i}=ent_{6k}+1.$ \\
Step 8: Set $j = i+1$, then set $i = j.$  If $i \leq s$, go to Step 1, else go to Step 9.\\
Step 9: Exit. \\ \\
\begin{proposition}
Consider the Jaco Graph $J_i(1),~i\geq 4.$ If the Jaconian vertex of $J_{i-1}(1)$ is unique say, $v_k$ then $k + d^+(v_k)<i$ and $(k+1)+ d^+(v_{k+1}) > i.$
\end{proposition}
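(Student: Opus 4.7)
The plan is to establish the two inequalities separately; the first uses only that $v_k$ is the prime Jaconian vertex of $J_{i-1}(1)$, while the second extracts the ``gap'' that uniqueness provides.

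For the bound $k+d^+(v_k)<i$, I would combine three facts: by Property 1, the degree of $v_k$ in $J_{i-1}(1)$ equals $k$; by Property 3, the in-degree $d^-(v_k)$ is the same in $J_{i-1}(1)$ as in $J_\infty(1)$; and $d^-(v_k)+d^+(v_k)=k$ in $J_\infty(1)$. Together these force the out-degree of $v_k$ in $J_{i-1}(1)$ to equal $d^+(v_k)$, so every out-arc of $v_k$ in $J_\infty(1)$ is already present in $J_{i-1}(1)$. Definition 1.1 makes the out-neighbourhood of $v_k$ the contiguous block $v_{k+1},\ldots,v_{k+d^+(v_k)}$, and all of these indices must be at most $i-1$, giving $k+d^+(v_k)\le i-1<i$.

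For $(k+1)+d^+(v_{k+1})>i$ I would use uniqueness. Since $\Delta(J_{i-1}(1))=k$ and $v_{k+1}\notin\Bbb J(J_{i-1}(1))$, we have $d_{J_{i-1}(1)}(v_{k+1})\le k-1$. On the other hand, $d^-(v_{k+1})+d^+(v_{k+1})=k+1$ in $J_\infty(1)$, and the in-degree is unchanged when passing from $J_\infty(1)$ to $J_{i-1}(1)$. Hence at least $(k+1)-(k-1)=2$ of the out-arcs of $v_{k+1}$ in $J_\infty(1)$ must be lost under the restriction, that is, must point to vertices of index $\ge i$. Because the out-neighbourhood $v_{k+2},\ldots,v_{(k+1)+d^+(v_{k+1})}$ is contiguous, its top index must reach at least $i+1$, yielding $(k+1)+d^+(v_{k+1})\ge i+1>i$.

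The main obstacle I anticipate is the careful bookkeeping between degrees in $J_\infty(1)$ and in $J_{i-1}(1)$, together with the correct use of contiguity of out-neighbourhoods. The conceptual crux is that uniqueness of $v_k$ (not merely its being Jaconian) forces \emph{two} missing out-arcs of $v_{k+1}$ rather than one, and this extra missing arc is exactly what upgrades the weak inequality $\ge i$ to the strict $>i$ required by the statement.
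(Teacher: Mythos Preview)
Your proposal is correct and follows essentially the same line as the paper's own proof: both parts hinge on the observations that $v_k$ already attains its full degree $k$ in $J_{i-1}(1)$ (so its out-neighbourhood is entirely inside $\{v_1,\dots,v_{i-1}\}$), while uniqueness forces $d_{J_{i-1}(1)}(v_{k+1})\le k-1$, leaving at least two of the contiguous out-arcs of $v_{k+1}$ pointing beyond $v_{i-1}$. The only cosmetic difference is that the paper cites Theorem~2.11 for the first part and phrases the second as ``extend to $J_i(1)$ and link $(v_{k+1},v_i)$'', whereas you argue directly from Properties~1 and~3 and count the missing arcs; the underlying reasoning is the same.
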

\begin{proof}
Because the Jaconian vertex $v_k$ is unique to $J_{i-1}(1)$ it implies that edge $(v_k,v_{i-1})$ exists (see Theorem 2.11), so $k+d^+(v_k)= i-1<i.$  And since edge $(v_{k+1},v_i)$ does not exist in $J_{i-1}(1)$ we have $d(v_{k+1})= k-1.$ \\ \\   
By extending to $J_i(1)$ the edge $(v_{k+1},v_i)$ is linked. So degree of $v_{k+1}$ increases to $d(v_{k+1})=k$ implying $d^+(v_{k+1})$ increased by $1.$ Thus, $(k+1) + d^+(v_{k+1})= (k+1)+(d^+(v_k)+1)=(i-1)+2=i+1>i.$  
\end{proof}
\begin{lemma}
(Conjectured): If for $n \in \Bbb N$ we have that $d^+(v_n) = \ell$ is non-repetitive (meaning $d^+(v_{n-1}) < d^+(v_n) < d^+(v_{n+1})$) then, $\Bbb J(J_n(1)) = \{v_\ell \}.$
\end{lemma}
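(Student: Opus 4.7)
The plan is to use the formula
$$d_{J_n(1)}(v_i) = d^-(v_i) + \min\!\bigl(d^+(v_i),\, n-i\bigr)$$
for $i\le n$, where $d^+$ and $d^-$ refer to the degrees in $J_\infty(1)$ (stable under truncation by Property 3 and the fact that tails of arcs into $v_i$ have index strictly less than $i$). Since $d^-(v_i)+d^+(v_i)=i$, introducing $M(v_i):=i+d^+(v_i)$ for the largest index that $v_i$ points to in $J_\infty(1)$, the formula collapses to $d_{J_n(1)}(v_i)=i$ when $M(v_i)\le n$ and to $d_{J_n(1)}(v_i)=n-d^+(v_i)$ when $M(v_i)>n$. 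By the contiguity of tails of arcs incident on $v_n$, the predecessors of $v_n$ in $J_\infty(1)$ are exactly $v_\ell,v_{\ell+1},\dots,v_{n-1}$ (a block of size $d^-(v_n)=n-\ell$), so $v_\ell$ is the smallest such predecessor.

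The next step is to pin down $M(v_\ell)=n$. On the one hand $M(v_\ell)\ge n$, because $v_\ell$ is a predecessor of $v_n$. On the other hand, the right half of the hypothesis $d^+(v_{n+1})>\ell$ rearranges to $d^-(v_{n+1})<n+1-\ell$, so the smallest predecessor of $v_{n+1}$ has index strictly greater than $\ell$; hence $v_\ell$ does not point to $v_{n+1}$, giving $M(v_\ell)\le n$. Combining, $d_{J_n(1)}(v_\ell)=\ell$. The easy cases now drop out: for $i<\ell$, $v_i$ is not a predecessor of $v_n$, so $M(v_i)<n$ and $d_{J_n(1)}(v_i)=i<\ell$; for $i=n$, $d_{J_n(1)}(v_n)=d^-(v_n)=n-\ell$, and since $d^+(v_j)\ge 1$ for every $j\ge 1$ forces $d^-(v_\ell)=2\ell-n\ge 1$, one gets $n<2\ell$ and thus $d_{J_n(1)}(v_n)<\ell$.

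The remaining range $\ell<i<n$ is where the real work lies: here $d_{J_n(1)}(v_i)=n-d^+(v_i)$, and I would need to show $d^+(v_i)>n-\ell=d^+(v_\ell)$ strictly. The successive increments obey $d^+(v_{j+1})-d^+(v_j)=|\{k<j:M(v_k)=j\}|$, and because $M$ is strictly increasing, each such increment lies in $\{0,1\}$. The question is then whether at least one jump occurs in $[\ell,i-1]$; the worst instance is $i=\ell+1$. The one configuration that would defeat the claim is an extremal plateau, $d^+(v_\ell)=d^+(v_{\ell+1})=\cdots=d^+(v_{n-1})=n-\ell$, in which every $v_{\ell+1},\dots,v_{n-1}$ would match $v_\ell$ in degree $\ell$. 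The jump size $2\ell-n$ at $v_n$ being bounded by $1$ forces $n=2\ell-1$ and a plateau of length $\ell-1$. The main obstacle, and the reason the authors label the lemma as conjectural, is to rule out this extremal plateau for $\ell\ge 3$; I would approach it by backward analysis of the predecessor blocks such a plateau imposes on $v_\ell,\dots,v_{n-1}$ (each forced to shift by exactly one, propagating a rigid pattern of $M$-values), and attempt to derive a contradiction with the Fibonacci/Zeckendorf-type constraints on $d^+$ alluded to in the abstract.
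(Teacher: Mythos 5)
There is nothing in the paper to compare your attempt against: Lemma 2.2 is explicitly labelled ``(Conjectured)'' and reappears at the end of the paper as ``[Open problem: Prove Lemma (conjectured) 2.2.]'', so the authors give no proof at all. Your submission is therefore judged purely on its own merits, and on those terms it is not a proof but a (largely correct) reduction with an acknowledged hole. The preparatory material is sound: the formula $d_{J_n(1)}(v_i)=d^-(v_i)+\min(d^+(v_i),\,n-i)$, the identification $M(v_i)=i+d^+(v_i)$, the contiguity of the in-neighbour block of $v_n$, the derivation $M(v_\ell)=n$ from the hypothesis $d^+(v_{n+1})>\ell$, and the dispatch of the ranges $i<\ell$ and $i=n$ all check out. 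But you say yourself that for $\ell<i<n$ you ``would need to show'' $d^+(v_i)>n-\ell$ and that you ``would approach'' the obstruction and ``attempt to derive a contradiction''. That unproved inequality, in its worst case $d^+(v_{\ell+1})>d^+(v_\ell)$, is precisely the content of the conjecture; everything before it is bookkeeping.

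Two further points. First, your claim that ``the one configuration that would defeat the claim is an extremal plateau $d^+(v_\ell)=\cdots=d^+(v_{n-1})=n-\ell$'' is wrong: a single tie $d^+(v_{\ell+1})=d^+(v_\ell)$ already gives $d_{J_n(1)}(v_{\ell+1})=\ell=d_{J_n(1)}(v_\ell)$ and hence $\Bbb J(J_n(1))\supseteq\{v_\ell,v_{\ell+1}\}$, regardless of what happens at $v_{\ell+2},\dots,v_{n-1}$. So ruling out the full plateau would not suffice. Second, the Zeckendorf direction you gesture at in your last sentence is in fact the right one and can be made to work: by Bettina's Theorem, $d^+(v_{j+1})>d^+(v_j)$ exactly when the Zeckendorf representation of $j$ does not end in $f_2$; the hypothesis that $d^+$ jumps at both $n-1\to n$ and $n\to n+1$ forces the least Zeckendorf index of $n$ to be at least $4$, whence the least index of $\ell=d^+(v_n)$ is at least $3$ and $d^+(v_{\ell+1})>d^+(v_\ell)$ follows. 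Carrying that computation out explicitly would turn your sketch into a complete proof of what the paper leaves open.
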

\begin{theorem}
(Morrie's Theorem): If a Jaco Graph $J_n(1), n \geq 2$ has a prime Jaconian vertex $v_k$ then:\\ \\
(a) $d^-(v_k)=d^-(v_{k+1})$ and $d^-(v_{k+2})=d^-(v_{k+1})+1$ if and only if $\Bbb J(J_n) = \{v_k\}$ and $\Bbb J(J_{n+1}(1)) = \{v_k, v_{k+1},v_{k+2}\}$, \\ 
(b) $d^-(v_k)= d^-(v_{k+1}) = d^-(v_{k+2})$ if and only if $\Bbb J(J_n) = \{v_k\}$ and $\Bbb J(J_{n+1}(1)) = \{v_k, v_{k+1}\}.$
\end{theorem}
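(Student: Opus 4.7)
The plan is to track the degrees of $v_k$ and the leading Hope vertices $v_{k+1}, v_{k+2},\ldots$ as one passes from $J_n$ to $J_{n+1}$, relying on two structural facts: Property 1 forces $d(v_k)=k$, and Definition 1.5 makes the Hope subgraph of $J_n$ the complete graph on $v_{k+1},\ldots,v_n$. Completeness of the Hope subgraph yields, for each $1\le i\le n-k$, the clean decomposition
\[
d(v_{k+i}) = d^-(v_{k+i})+(n-k-i),
\]
since the out-neighbours of $v_{k+i}$ inside $J_n$ are precisely the later Hope vertices. Applied to $v_k$ itself this forces the running identity $n=2k-d^-(v_k)$; writing $a:=d^-(v_k)$ so $n=2k-a$, I will also use the \emph{reach} of a vertex $v_j$, namely $2j-d^-(v_j)$, which by Definition 1.1 is the largest index of an out-neighbour of $v_j$ in $J_\infty$.

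For the forward direction of (a), the hypotheses $d^-(v_{k+1})=a$ and $d^-(v_{k+2})=a+1$ fed into the decomposition give $d(v_{k+1})=d(v_{k+2})=k-1$, so only $v_k$ attains the maximum degree $k$ and $\mathbb{J}(J_n)=\{v_k\}$. Passing to $J_{n+1}$ I compute $\mathrm{reach}(v_k)=n$, $\mathrm{reach}(v_{k+1})=n+2$ and $\mathrm{reach}(v_{k+2})=n+3$, so $v_k$ does not acquire the new edge (its degree stays at $k$) while $v_{k+1}$ and $v_{k+2}$ each pick up the edge to $v_{n+1}$ and rise to degree $k$. To conclude $\mathbb{J}(J_{n+1})=\{v_k,v_{k+1},v_{k+2}\}$ I must also rule out $v_{k+i}$ for $i\ge 3$; setting $\alpha_i:=d^-(v_{k+i})-(i-1)$ (the in-edges to $v_{k+i}$ coming from $v_1,\ldots,v_k$), the degree of $v_{k+i}$ in $J_{n+1}$ equals $\alpha_i+(n-k)$, hitting $k$ precisely when $\alpha_i=a$. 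Monotonicity $\alpha_{i+1}\le\alpha_i$ (an out-edge from a vertex of index at most $k$ to $v_{k+i+1}$ forces one to $v_{k+i}$) reduces the remaining obligation to showing $\alpha_3<a$. Case (b) runs in parallel: the sharper equality $d^-(v_{k+2})=a$ drives $d(v_{k+2})=k-2$ in $J_n$, and the promotion via $v_{n+1}$ only lifts $v_{k+2}$ to $k-1$, excluding it from $\mathbb{J}(J_{n+1})=\{v_k,v_{k+1}\}$; here the exclusion of $v_{k+i}$ for $i\ge 3$ is automatic from $\alpha_i\le\alpha_2=a-1<a$.

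For the converse directions of both (a) and (b) I would start from the prescribed Jaconian sets of $J_n$ and $J_{n+1}$, inspect which of $v_k,v_{k+1},v_{k+2}$ jumps in degree upon attachment of $v_{n+1}$, and invert the decomposition to recover $d^-(v_{k+1})$ and $d^-(v_{k+2})$. The main obstacle I anticipate is the ``no stray Jaconian at $v_{k+3}$'' step in case (a), i.e.\ $\alpha_3<a$; I plan to discharge it by applying Lemma 1.2 to the initial segment $v_1,\ldots,v_k$ (whose degrees determine the $\alpha$-contributions via Property 3) to force enough reach diversity that at least one $v_j$ with $j\le k$ has reach exactly $k+2$ and therefore drops out of $\alpha_3$.
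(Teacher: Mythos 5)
Your central device --- the decomposition $d(v_{k+i})=d^-(v_{k+i})+(n-k-i)$ coming from completeness of the Hope subgraph --- is exactly the quantity the paper manipulates: Step 5 of the Fisher Algorithm tests whether $i-l+d^-(v_l)$ equals the prime Jaconian index, and the paper's forward computation for $l=k+1,k+2,k+3$ under the normalisation $n=2k-d^-(v_k)$ is the same calculation you perform, written in algorithmic bookkeeping rather than graph-theoretic language. So in approach you and the paper coincide; your converse direction is only sketched, and the paper's converse is likewise a formal inversion of the same entries.

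The genuine gap is the one you flag yourself: excluding $v_{k+3}$ in case (a), i.e.\ proving $\alpha_3<a$, equivalently $d^-(v_{k+3})\le d^-(v_k)+1$. Your proposed remedy --- Lemma 1.2 applied to $v_1,\dots,v_k$ to ``force enough reach diversity'' --- does not work as stated: Lemma 1.2 bounds successive degree differences inside one fixed graph and says nothing about which values the reach $2j-d^-(v_j)$ actually attains, and the reach sequence does skip values (no vertex has reach $4$ or $6$, for instance), so diversity alone cannot place a vertex at reach exactly $k+2$. What closes the step is: (i) $0\le d^-(v_{j+1})-d^-(v_j)\le 1$, so the reach is strictly increasing with increments $1$ or $2$ and hence never skips two consecutive values; and (ii) the hypotheses of (a) imply that exactly one $j<k$ has reach exactly $k$ and no $j<k$ has reach exactly $k+1$, whence some $v_j$, $j\le k$, has reach exactly $k+2$ and is lost as an in-neighbour in passing from $v_{k+2}$ to $v_{k+3}$. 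Until (i) is proved and (ii) is exploited, the exclusion of $v_{k+3}$ remains open --- although, to be fair, this is precisely where the paper's own proof is weakest, since it simply lists $d^-(v_{k+3})=d^-(v_k)+1$ among its ``associated entries'' without justification. A secondary caution: applying your decomposition ``to $v_k$ itself'' to force $n=2k-d^-(v_k)$ presupposes that $v_k$ is joined to every later vertex of $J_n(1)$, which is Theorem 2.11(a) and rests on uniqueness of the Jaconian vertex --- part of what is being proved; indeed the in-degree hypotheses of (a) also hold when $n=2k-d^-(v_k)+1$ with $v_k$ still the prime Jaconian vertex (take $k=3$, $n=6$), so this normalisation requires an explicit argument (or a sharper reading of the hypotheses) rather than being automatic.
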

\begin{proof}
Let $d^-(v_k)=d^-(v_{k+1})$ and $d^-(v_{k+2})=d^-(v_{k+1})+1$ for the Jaco Graph $J_n(1), n\geq 2.$ and let $\Bbb J(J_n(1)) = \{v_k\}.$ From definition 1.7 and Steps 1, 2 and 3 of the Fisher Algorithm it follow that we have associated entries:\\ \\
\noindent $ent_{1k} = k, ent_{2k} = d^-(v_k), ent_{3k} = d^+(v_k) = k - d^-(v_k)$ and,\\
$ent_{1(k+1)} = k +1, ent_{2(k+1)} = d^-(v_{k+1})= d^-(v_k), ent_{3(k+1)} = d^+(v_{k+1}) = (k+1) - d^-(v_k)$ and, \\
$ent_{1(k+2)} = k +2, ent_{2(k+2)} = d^-(v_{k+2})= d^-(v_k)+1, ent_{3(k+1)} = d^+(v_{k+2}) = (k+2) - d^-(v_k) - 1$ and, \\
$ent_{1(k+3)} = k +3, ent_{2(k+3)} = d^-(v_{k+3})= d^-(v_k)+1, ent_{3(k+3)} = d^+(v_{k+3}) = (k+3) - d^-(v_k) - 1.$  \\ \\
\noindent Let $n = 2k - d^-(v_k)$ and it easily follows from Step 5 that $\Bbb J(J_n(1)) = \{v_k\}.$ Now let $n = 2k - d^-(v_k) + 1$ and initialise $\Bbb J(J_{n+1}(1)) = \{v_k\}$ and set $t = k.$ Also let $l = k+1, k+2, ..., 2k - d^-(v_k).$ \\ \\
For $l= k+1$ we have that $(2k - d^-(v_k) +1) - (k+1) + d^-(v_k) = k = t,$ so $v_{k+1} \in \Bbb J(J_{n+1}(1)).$ \\
For $l=k+2$ we have that $(2k - d^-(v_k) +1) - (k+2) + d^-(v_k) +1 = k = t,$ so $v_{k+2} \in \Bbb J(J_{n+1}(1)).$ \\
For $l=k+3$ we have that $(2k - d^-(v_k) +1) - (k+3) + d^-(v_k) +1 = k-1 \neq t,$ so $v_{k+3} \notin \Bbb J(J_{n+1}(1)).$ \\ \\
So it follows that if $d^-(v_k)=d^-(v_{k+1})$ and $d^-(v_{k+2})=d^-(v_{k+1})+1$ then $\Bbb J(J_n(1)) =\{v_k\}$ and $\Bbb J(J_{n+1}(1)) = \{v_k, v_{k+1},v_{k+2}\}$ with $n \in \{2k - d^-(v_k), 2k - d^-(v_k) +1\}.$ \\ \\
Conversely, if $\Bbb J(J_n(1)) =\{v_k\}$ and $\Bbb{J}(J_{n+1}(1))= \{v_k,v_{k+1},v_{k+2}\}$ we have from the inverse of definition 1.7 and Steps 1, 2, 3, 4, 5 and 6 of the Fisher Algorithm the associated entries: 

\noindent $ent_{1n} = n, ent_{2n} = (n-k), ent_{3n} = k, ent_{4n} = \{v_k\}, ent_{5n} = k \Rightarrow$ \\
$ent_{1k} = k, ent_{2k} = 2k-n, ent_{3k} = n-k, ent_{5k} = k \Rightarrow$ \\
$ent_{1(k+1)} = k+1, ent_{2(k+1)} = 2k-n, ent_{3(k+1)} = n- k + 1, ent_{5(k+1)} = k \Rightarrow$ \\
$ent_{1(k+2)} = k+2, ent_{2(k+2)} = 2k-n+1, ent_{3(k+2)} = n- k + 1, ent_{5(k+2)} = k+1.$ \\ 
$\therefore d^-(v_k)=d^-(v_{k+1})$ and $d^-(v_{k+2})=d^-(v_{k+1})+1.$ \\ \\
Result $(b)$ follows similarly to $(a)$.
\end{proof}
\begin{proposition} 
For all Jaco Graphs $J_n(1),$ we have Card $\Bbb{J}(J_n(1))\leq3.$ 
\end{proposition}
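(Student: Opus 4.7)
The plan is a proof by contradiction based on a regularity of the integer sequence $f(i):=2i-d^-(v_i)=i+d^+(v_i)$, which by Definition 1.1 is the largest index $j$ such that $(v_i,v_j)$ is an edge of $J_\infty(1)$. First I would observe that $\Bbb J(J_n(1))$ is a contiguous block of indices: the Hope subgraph on $\{v_{t+1},\dots,v_n\}$ is complete (Definition 1.5), giving every Hope vertex internal degree $n-t-1$, while Definition 1.1 makes the out-neighbours of any $v_r$ a consecutive block $v_{r+1},\dots,v_{f(r)}$, so the external in-degree of a Hope vertex $v_l$ (from $\{v_1,\dots,v_t\}$) equals $|\{r\leq t:f(r)\geq l\}|$ and is therefore non-increasing in $l$. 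Hence $d(v_{t+1})\geq d(v_{t+2})\geq\cdots\geq d(v_n)$ and $\Bbb J(J_n(1))=\{v_t,v_{t+1},\dots,v_{t+s}\}$ for some $s\geq 0$; it suffices to show $s\leq 2$.

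Next I would prove by induction on $l$ the sub-lemma
\[
f(l+1)-f(l)\in\{1,2\}\qquad\text{for every }l\geq 1,\quad f(1)=2,
\]
equivalently, $d^-$ has increments in $\{0,1\}$. The induction rests on the counting identity
\[
d^-(v_{l+1})-d^-(v_l)=1-\bigl|\{\,i<l:f(i)=l\,\}\bigr|,
\]
which follows from $d^-(v_l)=|\{i<l:f(i)\geq l\}|$ together with $d^+(v_l)\geq 1$ (the latter because $d^-(v_l)\leq l-1$). The inductive hypothesis that $f$ is strictly increasing on $\{1,\dots,l-1\}$ makes $f$ injective there, so the cardinality on the right is at most $1$.

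Now assume for contradiction that $s\geq 3$, so $\{v_t,v_{t+1},v_{t+2},v_{t+3}\}\subseteq\Bbb J(J_n(1))$ and in particular $n\geq t+3$. For each $l\in\{t+1,t+2,t+3\}$, the identity $d(v_l)=t$ combined with $l>t$ and Definition 1.7 forces $d^+(v_l)>n-l$, hence $d^-(v_l)=t+l-n$ and $d^+(v_l)=n-t$. Therefore $d^-(v_{t+2})-d^-(v_{t+1})=1$ and $d^-(v_{t+3})-d^-(v_{t+2})=1$, so the identity above yields $|\{i<l:f(i)=l\}|=0$ for $l=t+1$ and $l=t+2$. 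Since $f(t+1)=n+1\neq t+2$ (as $n\geq t+3$), it follows that $\{t+1,t+2\}\cap f(\{1,\dots,t\})=\emptyset$. But with $f(1)=2$ and all increments of $f$ in $\{1,2\}$, any two values absent from $f(\{1,\dots,t\})$ must lie at distance at least $2$ apart, which precludes the simultaneous absence of both $t+1$ and $t+2$. This contradiction forces $s\leq 2$.

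The main obstacle is the inductive sub-lemma on $f$: it is simple to state, but the induction couples strict monotonicity of $f$ with the count $|\{i<l:f(i)=l\}|$, so the hypothesis and conclusion have to be interleaved carefully. Once the sub-lemma is in hand, the closing contradiction is essentially arithmetic and takes only a few lines.
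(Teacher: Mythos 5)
Your proof is correct in substance but follows a genuinely different route from the paper's. The paper's argument leans on Morrie's Theorem and the Fisher Algorithm: it takes a three-element Jaconian set $\{v_k,v_{k+1},v_{k+2}\}$, reads off $d^-(v_k)=d^-(v_{k+1})$ and $d^-(v_{k+2})=d^-(v_{k+1})+1$, and then simply asserts ``it follows that $d^-(v_{k+3})=d^-(v_{k+1})+1$'' before checking in Step 5 that $v_{k+3}$ fails the Jaconian test. That asserted equality is exactly the non-trivial content of the proposition (if instead $d^-(v_{k+3})=d^-(v_{k+2})+1$, the set would have four elements), so the paper's proof leaves the essential step unjustified. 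You supply precisely the missing structure: the contiguity of $\Bbb{J}(J_n(1))$ via monotonicity of degrees on the Hope subgraph, the identity $d^-(v_{l+1})-d^-(v_l)=1-|\{i<l: f(i)=l\}|$ for $f(i)=i+d^+(v_i)$, and the inductive fact that $f$ is strictly increasing with steps in $\{1,2\}$, from which it follows that no two consecutive integers in the range of $f$ can both be skipped. This buys a self-contained, first-principles argument that does not depend on Morrie's Theorem or on the correctness of the Fisher Algorithm, at the cost of a longer development. One small repair is needed at the end: as literally stated, ``any two values absent from $f(\{1,\dots,t\})$ lie at distance at least $2$ apart'' is false, since every integer exceeding $f(t)$ is absent; the claim holds only for absent values inside $[2,f(t)]$, so you must check $t+2\leq f(t)$. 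That is immediate from what you have already shown: $f(t)=t+d^+(v_t)\geq t+1$, and $f(t)=t+1$ would put $t+1$ in the image of $f$ on $\{1,\dots,t\}$, contradicting the vanishing of $|\{i<t+1: f(i)=t+1\}|$; hence $f(t)\geq t+2$ and the contradiction stands.
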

\begin{proof}
It is evident that for some $m\in\Bbb{N},$ Card $\Bbb{J}(J_m(1))=3.$ Let $\Bbb{J}(J_m(1))=\{v_k,v_{k+1},v_{k+2}\}.$ So in Step 4 of the Fisher Algorithm we initially set $i=m$ and $t=k.$ We also have that $i - (k+2)+ d^-(v_{k+2})= t.$\\ \\
From Morrie's theorem it follows that $d^-(v_k)=d^-(v_{k+1})$ and $d^-(v_{k+2})=d^-(v_{k+1})+1.$ It follows that, $d^-(v_{k+3})=d^-(v_{k+1})+1.$ However, in Step 5 we have $i-(k+3)+ d^-(v_{k+3})= i-(k+3)+d^-(v_{k+1})+1= (i-(k+2)+d^-(v_{k+2}))-1<t.$ So vertex $v_{k+3}$ cannot be added to $\Bbb J(J_m(1)).$
\end{proof}
\begin{corollary}
From Proposition 2.4 it follows that if  and only if the Jaconian vertex of $J_{i-1}(1),~i\geq2$ is unique say, $v_k$ then $\Bbb{J}(J_i(1)) =$ either $\{v_k,v_{k+1}\}$ or $\{v_k,v_{k+1},v_{k+2}\}.$
\end{corollary}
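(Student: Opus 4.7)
The plan is to combine Proposition 2.1, Property 1, Corollary 1.3 and Proposition 2.4 in a short chain. The phrase ``if and only if'' is most naturally read as asserting that $\Bbb{J}(J_i(1))$ must be exactly one of the two displayed sets, so the substance of the argument is the forward implication, with Proposition 2.4 supplying the upper cap on the cardinality of the Jaconian set.

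First I would invoke Proposition 2.1: since $\Bbb{J}(J_{i-1}(1)) = \{v_k\}$, the relation $k + d^+(v_k) = i-1$ shows that $v_k$ has no edge to $v_i$ in $J_i(1)$, so $d(v_k) = k$ is preserved on passing from $J_{i-1}(1)$ to $J_i(1)$; simultaneously the edge $(v_{k+1}, v_i)$ is linked, raising $d(v_{k+1})$ from $k-1$ to $k$. Hence $\{v_k, v_{k+1}\} \subseteq \Bbb{J}(J_i(1))$ with both vertices at degree $k$.

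Next I would show that $\Delta(J_i(1)) = k$, so that $v_k$ remains the prime Jaconian. By Corollary 1.3 the only alternative is $\Delta(J_i(1)) = k+1$, but then Property 1 applied to the prime Jaconian $v_\ell$ of $J_i(1)$ would force $d(v_\ell) = \ell = k+1$, i.e., $\ell = k+1$, contradicting the computed value $d(v_{k+1}) = k$. Consequently the Jaconian set of $J_i(1)$ is a block of consecutive indices starting at $v_k$ (by the termination rule in Step 5 of the Fisher Algorithm together with Lemma 1.2), and since the block already contains $\{v_k, v_{k+1}\}$ while Proposition 2.4 caps its size at three, the block must equal either $\{v_k, v_{k+1}\}$ or $\{v_k, v_{k+1}, v_{k+2}\}$.

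The main obstacle is the elimination step $\Delta(J_i(1)) \ne k+1$: one must correctly combine Property 1 (forcing $d(v_\ell) = \ell$ at the prime Jaconian) with the Proposition 2.1 computation $d(v_{k+1}) = k$ to preclude any new vertex attaining degree $k+1$. Once that hurdle is cleared, the remainder is a one-line invocation of Proposition 2.4 together with the consecutiveness of the Jaconian block, precisely as the opening phrase ``From Proposition 2.4 it follows'' suggests.
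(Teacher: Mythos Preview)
Your forward implication is essentially the paper's argument, carried out with more care: both you and the paper use Proposition~2.1 (or its proof) to see that $(v_k,v_i)$ is not linked while $(v_{k+1},v_i)$ is, so that $d(v_k)=d(v_{k+1})=k$ in $J_i(1)$; the paper then leans directly on Proposition~2.4 together with the dichotomy from Morrie's Theorem to finish, whereas you interpose an explicit elimination of $\Delta(J_i(1))=k+1$ via Corollary~1.3 and Property~1 before invoking Proposition~2.4. That extra step is a genuine gain in rigour, since the paper's forward argument silently assumes that $v_k$ remains the prime Jaconian vertex of $J_i(1)$.

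Where you part company with the paper is in your reading of ``if and only if.'' You take it as sharpening ``either \ldots\ or'' into ``exactly one of,'' so that only the forward implication carries content. The paper, however, treats the statement as a genuine biconditional and supplies a separate converse: assuming $\Bbb{J}(J_i(1))=\{v_k,v_{k+1}\}$ (respectively $\{v_k,v_{k+1},v_{k+2}\}$), it runs Step~5 of the Fisher Algorithm backwards to deduce that $v_{k+1}\notin\Bbb{J}(J_{i-1}(1))$ while $v_k\in\Bbb{J}(J_{i-1}(1))$, whence $\Bbb{J}(J_{i-1}(1))=\{v_k\}$. Your proposal omits this direction entirely, so as a proof of the corollary \emph{as the paper intends it}, it is incomplete. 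The missing piece is short --- from $i-(k+1)+d^-(v_{k+1})=k$ one obtains $(i-1)-(k+1)+d^-(v_{k+1})=k-1$, ruling $v_{k+1}$ out of $\Bbb{J}(J_{i-1}(1))$ --- but it does need to be supplied.
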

\begin{proof}
By extending from to $J_{i-1}(1)$ to $J_i(1)$ the edge $(v_k,v_i)$ is not linked. Because $d(v_{k+1})= d(v_k)-1$ in $J_{i-1}(1)$ and increases by $1$ in $J_i(1)$ it follows that $d(v_{k+1})=d(v_k)$ in $J_i(1).$ Hence, at least $\Bbb J_i(1)=\{v_k,v_{k+1}\}.$ If  $d^-(v_{k+2})= d^-(v_{k+1})+1,$ then $\Bbb J_i(1)=\{v_k, v_{k+1},v_{k+2}\}.$  So it follows that $\Bbb J_i(1)=$ either $\{v_k,v_{k+1}\}$ or $\{v_k,~v_{k+1},~v_{k+2}\}.$\\  \\
Conversely, assume that $\Bbb J_i(1)=$ either $\{v_k,~v_{k+1}\}$ or $\{v_k,~v_{k+1},~v_{k+2}\}.$\\ 
Case 1:  Let $\Bbb {J}(J_i(1))=\{v_k,~v_{k+1}\}.$  So $i - (k+1)+ d^-(v_{k+1})= k$ in $J_i(1).$ Hence in $J_{i-1}(1)$ we have $(i-1)-(k+1)+ d^-(v_{k+1})= (i- (k+1))+ d^-(v_{k+1}) - 1= k-1.$ So from Step 5 of the Fisher Algorithm it follows that $v_{k+1}\notin\Bbb{J}(J_{i-1}(1)).$ However, $v_k\in\Bbb{J}(J_{i-1}(1))=\{v_k\}.$ \\ \\
Case 2: Let $\Bbb{J} (J_i(1))=\{v_k,~v_{k+1},~v_{k+2}\}.$ Same reasoning as in case 1, follows.
\end{proof}
\begin{corollary}
If $k+ d^+(v_k)= i$ and $(k+1)+ d^+(v_{k+1})> i+1$ then $v_k$ is the unique Jaconian vertex of $J_i(1).$
\end{corollary}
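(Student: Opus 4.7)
The plan is to read off the exact degrees of $v_k$ and $v_{k+1}$ in $J_i(1)$ from the two hypotheses, and then use Property 1 together with Proposition 2.3 to force $\Bbb J(J_i(1))=\{v_k\}$.

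First I would exploit $k+d^+(v_k)=i$: in $J_\infty(1)$ all out-neighbours of $v_k$ lie at indices $\leq i$, so truncation to $J_i(1)$ preserves $v_k$'s out-degree, and by Property 3 also its in-degree; hence $d(v_k)=d^+(v_k)+d^-(v_k)=k$ in $J_i(1)$, and the edge $(v_k,v_i)$ is present. Next I would exploit $(k+1)+d^+(v_{k+1})>i+1$, equivalently $d^+(v_{k+1})\geq i-k+1$: the truncation to $J_i(1)$ lops off at least two of $v_{k+1}$'s $J_\infty$-out-edges, leaving out-degree exactly $i-k-1$, while $d^-(v_{k+1})=(k+1)-d^+(v_{k+1})\leq 2k-i$. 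Summing gives $d(v_{k+1})\leq k-1$ in $J_i(1)$, and Lemma 1.2 furnishes the matching lower bound $d(v_{k+1})\geq d(v_k)-1=k-1$, so $d(v_{k+1})=k-1$.

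With $d(v_k)=k$ and $d(v_{k+1})=k-1$ in hand, let $v_p$ denote the prime Jaconian of $J_i(1)$. Property 1 gives $d(v_m)=m$ for every $m\leq p$; the value $d(v_{k+1})=k-1$ excludes $p\geq k+1$ and the value $d(v_k)=k$ excludes $p\leq k-1$, forcing $p=k$ and $\Delta(J_i(1))=k$. Proposition 2.3, together with the contiguity of Jaconian indices starting from the prime Jaconian (implicit in that proposition's proof), restricts $\Bbb J(J_i(1))$ to $\{v_k\}$, $\{v_k,v_{k+1}\}$ or $\{v_k,v_{k+1},v_{k+2}\}$; since $d(v_{k+1})=k-1\neq k$, the vertex $v_{k+1}$ is excluded and the set collapses to $\{v_k\}$.

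The degree bookkeeping under truncation is routine; the subtle point is the exact equality $d(v_{k+1})=k-1$. The upper bound requires using the strict inequality in the hypothesis (so that at least two of $v_{k+1}$'s $J_\infty$-out-edges are truncated), and this must be matched against the lower bound from Lemma 1.2. It is this tight equality that simultaneously locates the prime Jaconian via Property 1 and evicts $v_{k+1}$ from $\Bbb J(J_i(1))$.
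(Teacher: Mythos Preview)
Your argument is correct. The paper's own proof is the single sentence ``The result follows directly from Step 5 of the Fisher Algorithm,'' which amounts to the observation that the Step-5 test value $i-(k+1)+d^-(v_{k+1})=i-d^+(v_{k+1})<k$ under the hypothesis $(k+1)+d^+(v_{k+1})>i+1$, so $v_{k+1}$ is never added to $\Bbb J(J_i(1))$ and the algorithm exits with $\{v_k\}$. Your proof unpacks this same degree computation explicitly in $J_i(1)$, and in addition supplies what the paper's one-liner takes for granted: that $v_k$ really is the prime Jaconian vertex (your Property~1 / Lemma~1.2 argument pinning down $p=k$) and that no vertex beyond $v_{k+1}$ can re-enter $\Bbb J(J_i(1))$ (your contiguity remark). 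Two small notes: the cardinality bound you cite is Proposition~2.4 in the paper's numbering, not~2.3; and the contiguity of the Jaconian set is most cleanly justified by observing that $d(v_l)=i-d^+(v_l)$ for $l\geq k$ in $J_i(1)$ with $d^+$ non-decreasing, which is exactly what Step~5's early exit encodes.
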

\begin{proof}
The result follows directly from Step 5 of the Fisher Algorithm.
\end{proof}
\begin{proposition}
If we have $d^-(v_{k-1})= d^-(v_k)= d^-(v_{k+1})$ then $v_k$  is the unique Jaconian vertex of $J_l(1),~l=2k - d^-(v_k).$
\end{proposition}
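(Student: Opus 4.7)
The strategy is to show directly that in $J_l(1)$ the vertex $v_k$ achieves degree $k$, while every other vertex has degree strictly less than $k$; unique maximality then identifies $\Bbb J(J_l(1))$ with $\{v_k\}$.

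First, I would compute $d(v_k)$ in $J_l(1)$. Writing $d = d^-(v_k)$, the identity $d(v_k) = k$ in $J_\infty(1)$ forces $d^+(v_k) = k-d$, so the out-neighbours of $v_k$ are exactly $v_{k+1},\ldots,v_{k+(k-d)} = v_l$. All of these are present in $J_l(1)$, hence $d(v_k) = k$ and in particular $\Delta(J_l(1)) \geq k$.

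Second, I would bound the degrees of the remaining vertices. For $j<k$ the trivial bound $d(v_j)\leq j<k$ suffices. For $j>k$, the hypothesis $d^-(v_{k+1}) = d$ combined with Step 2 of the Fisher Algorithm gives $\Delta(J_k(1)) = k-d$; by Corollary 1.3 the quantity $\Delta(J_{j-1}(1))$ is non-decreasing in $j$, so $\Delta(J_{j-1}(1)) \geq k-d$ for every $j\geq k+1$. Inverting Step 2 once more yields $d^-(v_j) \leq (j-1)-(k-d) = j+d-k-1$. Since $v_j$ has at most $l-j = 2k-d-j$ potential out-neighbours in $J_l(1)$, summing produces $d(v_j) \leq k-1$. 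Combining both cases, $\Delta(J_l(1)) = k$ and this maximum is attained only at $v_k$, so $\Bbb J(J_l(1)) = \{v_k\}$.

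I do not anticipate a serious obstacle; the engine of the argument is the identity $d^-(v_i) = (i-1)-\Delta(J_{i-1}(1))$ from the Fisher Algorithm together with the monotonicity of $\Delta$, both of which are already available from the preceding text. The one point meriting a line of justification is the out-degree bound $d^+(v_j)|_{J_l(1)} \leq l-j$, which is immediate because $J_l(1)$ contains only $l-j$ vertices strictly above $v_j$. Note that the hypothesis $d^-(v_{k-1}) = d$ is not directly invoked by this proof; only the equality $d^-(v_k)=d^-(v_{k+1})$ seems essential, with the extra condition presumably included to place the proposition in the same normal form as Morrie's Theorem.
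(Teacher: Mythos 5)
Your proposal is correct, and for the uniqueness half it takes a genuinely different route from the paper. Both arguments open the same way: since $d^+(v_k)=k-d^-(v_k)$, the last out-neighbour of $v_k$ is $v_{2k-d^-(v_k)}=v_l$, so $v_k$ attains degree $k$ in $J_l(1)$. The paper then tests only the two neighbours $v_{k-1}$ and $v_{k+1}$ with the Step-5 count $l-j+d^-(v_j)$, using all three degree hypotheses and tacitly leaning on the structural facts (Proposition 2.4, Corollary 2.5) that the Jaconian set consists of at most three consecutive vertices starting at the prime Jaconian vertex. You instead bound every competitor directly: $d(v_j)\leq j<k$ for $j<k$, and for $j>k$ the Step-2 identity $d^-(v_j)=(j-1)-\Delta(J_{j-1}(1))$ together with the monotonicity of $\Delta$ (Corollary 1.3) and the truncation cap $d^+(v_j)\leq l-j$ give $d(v_j)\leq k-1$ in $J_l(1)$. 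This buys a self-contained argument that needs none of the consecutiveness machinery, and your closing observation is accurate: the hypothesis $d^-(v_{k-1})=d^-(v_k)$ is never needed for this implication, since $d(v_{k-1})\leq k-1<k$ rules out $v_{k-1}$ unconditionally, which makes the paper's Case 1 essentially redundant. The one caveat, shared equally with the paper's own proof, is that the Step-2 identity is asserted by the Fisher Algorithm rather than proved, so you are arguing at exactly the paper's level of rigour.
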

\begin{proof}
For $d^+(v_k)=k - d^-(v_k)$ and $l=k+ d^+(v_k)= k+ k - d^-(v_k)=2k - d^-(v_k)$ it follows that $v_k$  is a Jaconian vertex of $J_l(1).$
Furthermore, $v_k$ is the unique Jaconian vertex of $J_l(1)$ because:\\
Case 1: For $v_{k-1}$ and because $d^-(v_{k-1})= d^-(v_k),$  we have $l-(k-1)+d^-(v_k)=2k-d^-(v_k)-(k-1)+ d^-(v_k)=2k-d^-(v_k)-k+1+d^-(v_k)=2k-k+1=k+1>k.$ Hence, $v_{k-1}$ is not a Jaconian vertex of $J_l(1).$\\ \\
Case 2: For $v_{k+1}$ and because $d^-v_{k+1}) = d^-(v_k),$ we have $l-(k+1)+d^-(v_k)=2k-d^-(v_k)-(k+1)+d^-(v_k)=2k-d^-(v_k)-k-1+d^-(v_k)=2k-k-1=k-1<k.$ Hence, $v_{k+1}$ is not a Jaconian vertex of $J_l(1).$
\end{proof}
\begin{proposition}
$\Bbb{J}(J_{k-1}(1))=\{v_{l-1}\}$ if and only if $d^+(v_k)=d^+(v_{k+1})=l.$
\end{proposition}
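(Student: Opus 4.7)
The plan is to recast both sides of the equivalence as statements about the pair $\Delta(J_{k-1}(1)),\,\Delta(J_k(1))$ via the identity $d^+(v_j) = 1 + \Delta(J_{j-1}(1))$. This identity follows immediately from Step 2 of the Fisher Algorithm, which gives $d^-(v_j) = (j-1) - \Delta(J_{j-1}(1))$, together with $d^-(v_j) + d^+(v_j) = j$ in $J_\infty(1)$. Under this identity, the condition $d^+(v_k) = d^+(v_{k+1}) = l$ is equivalent to $\Delta(J_{k-1}(1)) = \Delta(J_k(1)) = l-1$, and the task reduces to showing that $\Bbb{J}(J_{k-1}(1)) = \{v_{l-1}\}$ if and only if this pair of equalities holds.

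For the forward direction, I would start from $\Bbb{J}(J_{k-1}(1)) = \{v_{l-1}\}$ and apply Property 1 to the prime Jaconian vertex $v_{l-1}$ to conclude $\Delta(J_{k-1}(1)) = d(v_{l-1}) = l-1$. Then, since the Jaconian set of $J_{k-1}(1)$ is a singleton, Corollary 2.5 yields $\Bbb{J}(J_k(1)) \in \bigl\{\{v_{l-1}, v_l\},\, \{v_{l-1}, v_l, v_{l+1}\}\bigr\}$, and $v_{l-1}$ remains Jaconian with unchanged degree $l-1$, giving $\Delta(J_k(1)) = l-1$.

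For the converse, I would assume $\Delta(J_{k-1}(1)) = \Delta(J_k(1)) = l-1$ and first identify the prime Jaconian vertex of $J_{k-1}(1)$: Property 1 forces its index to equal its degree, so it must be $v_{l-1}$. Proposition 2.4 then narrows $\Bbb{J}(J_{k-1}(1))$ to the three candidates $\{v_{l-1}\}$, $\{v_{l-1}, v_l\}$, or $\{v_{l-1}, v_l, v_{l+1}\}$. I would rule out the latter two uniformly: in both cases $d(v_l) = l-1$ already holds in $J_{k-1}(1)$, while the consecutive-tail rule from Definition 1.1 shows that the smallest tail of $v_k$ sits at index $k - d^-(v_k) = l$, so the edge $(v_l, v_k)$ enters the picture in $J_k(1)$ and forces $d(v_l)$ up to $l$, contradicting $\Delta(J_k(1)) = l - 1$.

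The step I expect to be the main obstacle is precisely this uniqueness argument in the converse: the hypothesis on the two maxima alone does not restrict the cardinality of $\Bbb{J}(J_{k-1}(1))$, and the decisive input is the observation that the incoming arcs into $v_k$ must land on a consecutive block of predecessors whose lowest member is exactly $v_l$, so any pre-existing saturation of $d(v_l)$ at $l-1$ is incompatible with the stipulated non-growth of $\Delta$ from $J_{k-1}(1)$ to $J_k(1)$.
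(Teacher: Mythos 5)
Your proof is correct, and it is tighter than the paper's own. The forward direction runs on the same bookkeeping the paper uses, namely Step 2 of the Fisher Algorithm together with $d^+(v_j)=j-d^-(v_j)$, i.e.\ $d^+(v_j)=1+\Delta(J_{j-1}(1))$; but you additionally justify $d^+(v_{k+1})=l$ through Corollary 2.5 and the saturation of the unique Jaconian vertex $v_{l-1}$ (so that $\Delta(J_k(1))=l-1$), a step the paper passes over with ``by similar reasoning''. The real divergence is the converse: the paper offers no argument beyond ``inversing the convergence properties of the Fisher Algorithm'', whereas you reduce the hypothesis to $\Delta(J_{k-1}(1))=\Delta(J_k(1))=l-1$, pin the prime Jaconian vertex of $J_{k-1}(1)$ at $v_{l-1}$ via Property 1, and eliminate the two- and three-element Jaconian sets by observing that $d^-(v_k)=(k-1)-(l-1)=k-l$ combined with the consecutive-tail property (Theorem 2.11) forces the edge $(v_l,v_k)$ in $J_k(1)$, which would lift $d(v_l)$ to $l$ and contradict $\Delta(J_k(1))=l-1$. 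This buys an actual proof of the ``only if'' half, at the modest cost of leaning on Proposition 2.4 and the convention that Jaconian sets are consecutive blocks beginning at the prime Jaconian vertex -- a convention the paper itself builds into Step 5 of the algorithm and into Morrie's Theorem, so your reliance on it is consistent with the source.
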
 
\begin{proof}
If $\Bbb {J}(J_{k-1}(1)=\{v_{l-1}\},$ implying $\Delta(J_{k-1}(1)) = l - 1,$ it follows from Step 2 of the Fisher Algorithm that $d^-(v_k) = (k-1) - \Delta (J_{k-1}) = (k-1) - d^+(v_{k-1}) .$ So because $k = (k-1) + 1,$ it follows that $d^+(v_k) = l$ because $d(v_i) = d^+(v_i) + d^-(v_i),\forall i.$ By similar reasoning $d^+(v_{k+1}) = l,$ so it holds that $d^+(v_k)=d^+(v_{k+1})=l.$ \\ \\
Conversely, if $d^+(v_k) = d^+(v_{k+1}) = l,$ it follows by inversing the convergence properties of the Fisher Algorithm that $\Bbb{J}(J_{k-1}(1))=\{v_{l-1}\}.$
\end{proof}
\begin{theorem}
Let $m=n+ \Delta(J_n(1)),$ then $\Delta(J_m(1))=$ either $n$ or $n-1.$
\end{theorem}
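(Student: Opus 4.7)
The plan is to pin down $\Delta(J_m(1))$ by explicitly computing the degree of $v_n$ inside $J_m(1)$ and then verifying that this value is the graph's maximum. Set $k=\Delta(J_n(1))$, so that $m=n+k$. The computational backbone is the identity $d^-(v_i)=(i-1)-\Delta(J_{i-1}(1))$, read off from Step~2 of the Fisher Algorithm, together with the fact that $d(v_i)=i$ in $J_\infty(1)$; these together yield $d^+(v_i)=1+\Delta(J_{i-1}(1))$ in $J_\infty(1)$, so the out-neighbours of $v_i$ in $J_\infty(1)$ are exactly $v_{i+1},v_{i+2},\ldots,v_{i+1+\Delta(J_{i-1}(1))}$. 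By Corollary~1.3 applied to $\Delta(J_{n-1}(1))$ versus $\Delta(J_n(1))=k$, either (A) $\Delta(J_{n-1}(1))=k$ or (B) $\Delta(J_{n-1}(1))=k-1$; I would handle the two cases separately.

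Next I would evaluate the degree of $v_n$ in $J_m(1)$. In case~(A) the last $J_\infty(1)$ out-neighbour of $v_n$ is $v_{n+1+k}=v_{m+1}$, which is lobbed off in $J_m(1)$, so exactly $k$ of its $k+1$ out-edges survive and the resulting degree is $(n-1-k)+k=n-1$. In case~(B) the last $J_\infty(1)$ out-neighbour is $v_{n+k}=v_m$, which is retained, so every $J_\infty(1)$ edge of $v_n$ survives and the degree of $v_n$ inside $J_m(1)$ equals the full $J_\infty(1)$ degree, namely $n$.

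The final step is to check that no other vertex outperforms $v_n$ inside $J_m(1)$. For $j<n$ the trivial bound $d(v_j)\le j\le n-1$ suffices. For $j>n$, Property~2 supplies $\Delta(J_{j-1}(1))\ge\Delta(J_n(1))=k$, so the last $J_\infty(1)$ out-neighbour of $v_j$ has index at least $j+1+k>m$; hence $v_j$ loses at least one out-edge in the truncation, its surviving out-degree is $m-j$, and the degree of $v_j$ in $J_m(1)$ works out to
\[
 (j-1-\Delta(J_{j-1}(1)))+(m-j)=m-1-\Delta(J_{j-1}(1))\le m-1-k=n-1.
\]
Combining the three ranges, $\Delta(J_m(1))$ is realised at $j=n$ and equals $n-1$ in case~(A), $n$ in case~(B). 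The main technical step is precisely this monotonicity bound $\Delta(J_{j-1}(1))\ge k$ for $j>n$; without it the comparison for $j>n$ is out of reach, but once it is in hand the rest reduces to substitutions into the Fisher entries.
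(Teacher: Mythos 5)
Your proof is correct, and it is both more explicit and more complete than the paper's. The paper splits on the cardinality of $\Bbb{J}(J_n(1))$ (unique Jaconian vertex versus two or three of them), computes $d^+(v_n)$ in each case to conclude that $v_n$ attains degree $n$ or $n-1$ in $J_m(1)$, and then simply asserts that $v_n$ is the prime Jaconian vertex of $J_m(1)$; the verification that no vertex $v_j$ with $n<j\le m$ does better is never carried out, and Case 2 is dispatched with "the same reasoning as in Case 1." You instead split on the value of $\Delta(J_{n-1}(1))\in\{k,k-1\}$ via Corollary 1.3 --- an equivalent but cleaner dichotomy, since $d^+(v_n)=1+\Delta(J_{n-1}(1))$ pins down $d^+(v_n)$ exactly in each branch rather than leaving the ambiguity "$k$ or $k+1$" --- and, crucially, you supply the missing maximality argument: the identity $d_{J_m}(v_j)=m-1-\Delta(J_{j-1}(1))$ for $j>n$ combined with the monotonicity of $\Delta$ (Property 2) gives $d_{J_m}(v_j)\le n-1$, while $d(v_j)\le j\le n-1$ handles $j<n$. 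That bound is exactly the step the paper's proof needs and does not state, so your argument closes a genuine gap; it also sharpens the conclusion by identifying which of $n$ and $n-1$ occurs in each case ($n$ precisely when $\Delta(J_{n-1}(1))=k-1$). The only caveats are minor and shared with the paper: the identity $d^-(v_i)=(i-1)-\Delta(J_{i-1}(1))$ is taken from Step 2 of the Fisher Algorithm rather than proved independently, and very small $n$ (where that step is stated only for $i\ge 4$) would need a separate word.
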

\begin{proof}
Let $m=n+ \Delta(J_n(1)).$\\
Case 1: Assume  $\Bbb{J}(J_n(1))=\{v_k\}$ so $\Delta(J_n(1))= k.$  We have that $d^+(v_n)=k$ so in $J_m(1)$ the edge $(v_n, v_m)$ is defined to attain $d(v_n)=n,$ and $v_n$ is the prime Jaconian vertex of $J_m(1),~m=n+ \Delta(J_n(1)).$ Hence, $\Delta(J_m(1))=n.$\\ \\
Case 2: Assume $\Bbb{J}(J_n(1))=\{v_k,v_{k+1}\}$ or $\{v_k,v_{k+1},v_{k+2}\}.$ We have that $d^+(v_n)=k$ or $k+1.$ So by the same reasoning as in Case 1 it follows that $\Delta(J_m(1))=n$ or $n-1.$
\end{proof}               
\begin{theorem}
[Conjectured] For the Jaco Graphs $J_n(1),$ $J_m(1)$ with $n\geq 3,~m\geq 3, n \neq m$ we have 
\begin{equation*}
\Delta(J_{n+m}(1)) =
\begin{cases}
\Delta(J_n(1))+ \Delta(J_m(1)), & \text{if $J_n(1)$ or $J_m(1)$ has a unique Jaconian vertex}\\
\Delta(J_n(1))+ \Delta(J_m(1)) + 1, & \text{otherwise.} 
\end{cases}
\end{equation*}
\end{theorem}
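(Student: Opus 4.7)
The plan is to reframe the claim using Property 1: since the prime Jaconian vertex $v_p$ of $J_n(1)$ satisfies $p = \Delta(J_n(1))$, and similarly $q = \Delta(J_m(1))$ corresponds to a prime Jaconian vertex $v_q$ of $J_m(1)$, the theorem is equivalent to showing that the prime Jaconian of $J_{n+m}(1)$ is either $v_{p+q}$ or $v_{p+q+1}$, with the choice dictated by whether at least one of $\Bbb{J}(J_n(1))$, $\Bbb{J}(J_m(1))$ is a singleton. Under this equivalence the problem becomes a question about the single Fisher Algorithm inequality $2r - d^-(v_r) \geq n+m$ for $r \in \{p+q, p+q+1\}$.

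I would proceed by strong induction on $n + m$, verifying the base cases $n + m \leq 10$ directly from the output of the Fisher Algorithm, and then using Theorem 2.10 as the main engine for the inductive step. Applied to $J_n(1)$, Theorem 2.10 determines $\Delta(J_{n+p}(1))$, and its Case 1 vs.\ Case 2 split records exactly whether $J_n(1)$ had a unique Jaconian vertex; the same reasoning, applied to $J_m(1)$, pins down $\Delta(J_{m+q}(1))$. Combining both facts with Corollary 1.3 — which says $\Delta$ increases by at most $1$ when $n$ is increased by $1$ — should give two-sided bounds sharp enough to force $\Delta(J_{n+m}(1)) \in \{p+q, p+q+1\}$.

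To distinguish which of the two values occurs, one needs to check the Fisher Algorithm's edge condition $2(p+q) - d^-(v_{p+q}) \geq n + m$ (respectively with $p+q+1$). By Property 3, $d^-(v_{p+q})$ in $J_{n+m}(1)$ coincides with $d(v_{p+q})$ in $J_{p+q}(1)$, so the inequality can be evaluated from strictly smaller graphs, matching the induction hypothesis. Morrie's Theorem (Theorem 2.3) together with Proposition 2.8 then control how $d^-$ jumps between $v_{p+q}$ and $v_{p+q+1}$, reducing the verification to at most nine sub-cases (each of $|\Bbb{J}(J_n(1))|, |\Bbb{J}(J_m(1))| \in \{1,2,3\}$ by Proposition 2.4).

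The main obstacle I anticipate is the sub-case where neither $\Bbb{J}(J_n(1))$ nor $\Bbb{J}(J_m(1))$ is a singleton: each application of Theorem 2.10 then leaves a one-unit ambiguity, and one must show that these ambiguities combine constructively to produce the ``$+1$'' in the conjecture rather than cancel. This is a delicate accounting problem that requires tracking $d^-(v_{p+q}) - d^-(v_p) - d^-(v_q)$ through the Fisher recursion. The hypothesis $n \neq m$ looks essential here, as it breaks a symmetry that would otherwise collapse the two candidate prime Jaconian extensions onto the same vertex of $J_{n+m}(1)$ and spoil the offset bookkeeping; handling this carefully is where the bulk of the technical work will sit.
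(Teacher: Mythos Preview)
The paper gives \emph{no} proof of this statement: Theorem 2.10 is explicitly labelled ``[Conjectured]'' and is repeated at the end of the paper as an open problem (``Prove Theorem (conjectured) 2.10''). There is therefore nothing to compare your argument against --- any correct proof would be new.

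As for the proposal itself, two concrete issues. First, you repeatedly cite ``Theorem 2.10'' as the engine of your induction, but Theorem 2.10 \emph{is} the statement you are trying to prove; from your description (Case~1/Case~2 split, output $\Delta(J_{n+\Delta(J_n(1))}(1))\in\{n,n-1\}$) you evidently mean Theorem 2.9. Please fix the numbering, since as written the argument is circular on its face. Second, and more substantively, Theorem 2.9 only locates $\Delta$ at the shifted indices $n+p$ and $m+q$, neither of which is $n+m$; invoking Corollary 1.3 (monotone growth by $0$ or $1$) between those points gives an interval of candidate values whose width is $|m-p|$ or $|n-q|$, not the width-$1$ window $\{p+q,\,p+q+1\}$ you need. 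Your own closing paragraph concedes that the decisive sub-case (neither Jaconian set a singleton) is still open and depends on controlling $d^-(v_{p+q}) - d^-(v_p) - d^-(v_q)$, which is exactly the heart of the conjecture. So what you have is a plausible framework and a correct identification of where the difficulty lies, but not yet a proof --- which is consistent with the paper's own assessment that the result remains open.
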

\begin{theorem}
If the Jaco Graph $J_n(1)$ has a unique Jaconian vertex (prime Jaconian vertex only) at $v_i,$ then:\\ \\
(a) Edge $(v_i, v_n)$ exists and,\\
(b) $\Delta(J_n(1))+d(v_n)= n.$
\end{theorem}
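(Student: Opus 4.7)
The plan is to show that the unique value of $n$ for which $v_i$ can be the sole Jaconian vertex is $n = m := 2i - d^-(v_i) = i + d^+(v_i)$; part (a) is then the statement $m = n$, and part (b) falls out by counting the predecessors of $v_n$.

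For (a), Property 1 gives $d(v_i) = i = \Delta(J_n)$, so $v_i$ realises its full $J_\infty$ out-degree inside $J_n$ and its farthest out-neighbour $v_m$ satisfies $m \leq n$. I will show $m = n$ by contradiction: assume $m < n$. By Lemma 1.1, $v_i$ is then the prime Jaconian of $J_m$ with $\Delta(J_m) = i$. Now split on $|\Bbb{J}(J_m)|$. If $|\Bbb{J}(J_m)| \geq 2$, then Proposition 2.4 and the consecutive form of the Jaconian set (Morrie's Theorem) force $v_{i+1} \in \Bbb{J}(J_m)$; a short calculation (using $d(v_{i+1}) = i$ in $J_m$ together with the reach formula $2j - d^-(v_j)$) pins down $d^-(v_{i+1}) = d^-(v_i) + 1$ and the reach of $v_{i+1}$ at $m+1$, so in $J_{m+1}$ the degree of $v_{i+1}$ rises to $i+1$, $\Delta$ jumps, and $v_i$ becomes non-Jaconian for every $n' > m$, contradicting $v_i \in \Bbb{J}(J_n)$. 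If instead $|\Bbb{J}(J_m)| = 1$, Corollary 2.5 gives $|\Bbb{J}(J_{m+1})| \in \{2,3\}$ immediately, and a parallel reach calculation yields $d^-(v_{i+1}) = d^-(v_i)$ and reach $m+2$, so $\Delta$ climbs at $n' = m+2$ and $v_i$ is uniquely Jaconian only at $n' = m$, again contradicting $n > m$. Either way $m = n$, which is exactly the existence of the edge $(v_i, v_n)$.

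For (b), with $n = m = 2i - d^-(v_i)$ established, observe that $v_n$ has no out-neighbours in $J_n$, so $d(v_n) = d^-(v_n)$. By the consecutive-tail property of Definition 1.1, the predecessors of $v_n$ form a block $v_{j_0}, v_{j_0+1}, \ldots, v_{n-1}$, and (a) gives $j_0 \leq i$. To see $j_0 = i$, I invoke the universal inequality $d^-(v_j) \leq d^-(v_{j-1}) + 1$ (every predecessor of $v_j$ distinct from $v_{j-1}$ is already a predecessor of $v_{j-1}$), which yields $d^-(v_{i-1}) \geq d^-(v_i) - 1$ and hence the reach of $v_{i-1}$ in $J_\infty$ is $2(i-1) - d^-(v_{i-1}) \leq n - 1 < n$. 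Thus $d^-(v_n) = n - i$, so $\Delta(J_n) + d(v_n) = i + (n - i) = n$.

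The main obstacle is the case analysis in (a): the arithmetic is routine but the indices must line up so that one sees \emph{exactly} at which $n'$ the value of $\Delta$ jumps from $i$ to $i+1$, and then verifies that $v_i$ can be uniquely Jaconian \emph{only} at $n' = m$. Everything else reduces to Lemma 1.1, Lemma 1.2, Morrie's Theorem, Corollary 2.5, and the reach formula $j + d^+(v_j) = 2j - d^-(v_j)$.
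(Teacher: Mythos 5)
Your proof is correct, and the underlying contradiction in (a) is the same one the paper uses -- if $v_i$'s out-edges stopped short of $v_n$, then $v_{i+1}$ would catch up to (or overtake) $d(v_i)$, destroying uniqueness -- but your execution is genuinely different and substantially more rigorous. The paper's proof of (a) is a three-sentence heuristic (``$v_{i+1}$ can at most be linked to $v_n$, so $d(v_{i+1})\geq d(v_i)$'') that never verifies that $v_{i+1}$ actually reaches $v_n$; you instead work with the reach formula $m=2i-d^-(v_i)$, establish $m\leq n$ from $d(v_i)=i$, and then rule out $m<n$ by a case split on $\mathrm{Card}\,\Bbb{J}(J_m(1))$ that computes $d^-(v_{i+1})$ exactly and locates the precise index $n'$ at which $\Delta$ jumps to $i+1$. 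This costs you Morrie's Theorem and Corollary 2.5 as inputs (the paper's proof uses neither), but it buys an airtight argument and, as a by-product, the sharper statement that $v_i$ is uniquely Jaconian for exactly one value of $n$. For (b) the two arguments count the same set of predecessors $v_i,\dots,v_{n-1}$, but the paper silently assumes that no $v_j$ with $j<i$ is a tail of an arc to $v_n$; your observation that $d^-(v_{i-1})\geq d^-(v_i)-1$ forces the reach of $v_{i-1}$ to be at most $n-1$ closes that gap explicitly. One presentational caveat: your case analysis leans on the monotonicity of $d^-$ and the strict monotonicity of the reach $2j-d^-(v_j)$ (e.g.\ to get $d^-(v_{i+1})\geq d^-(v_i)$ in the second case, and to apply Lemma 1.1 to $J_m(1)$); these follow from Step 2 of the Fisher Algorithm together with Corollary 1.3, but since the paper never isolates them as lemmas you should state and prove them before using them.
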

\begin{proof}
The proof follows through contra absurdum.
Assume the Jaco Graph $J_n(1)$ has a unique Jaconian vertex $v_i$. If the edge $(v_i, v_n)$ is undefined then at most, the edge $(v_i, v_{n-1})$ is defined. When considering vertex $v_{i+1}$ and proceeding with construction per definition, at least the vertex $v_{i+1},$ can at most, be linked to $v_n$ to have the edge $(v_{i+1}, v_n)$ defined.  
So, $d(v_{i+1})\geq d(v_i)= \Delta(J_n(1)),$ renders a contradiction on the uniqueness of the Jaconian vertex $v_i.$ Through contra absurdum we conclude that $(v_i, v_n)$ is defined. Hence, result $(a)$ follows.\\ \\
The Hope subgraph on the vertices $v_{i+1},v_{i+2},\cdots,v_n$ allows for $d(v_n)=(n-i)-1.$ But with the edge $(v_i, v_n)$ added we have $d(v_n)=(n-i)-1+1=n-i.$\\
$\therefore$ $\Delta(J_n(1))+d(v_n)=i+(n-i)= n.$ Hence, result $(b)$ follows.
\end{proof}    
\noindent Note that $\Delta(J_n(1))+ d(v_n) = n \nRightarrow$ uniqueness of the Jaconian vertex.
\begin{theorem}
Consider the Jaco Graph $J_n(1).$ For $m<i<k\leq n,$ the edge $(v_m, v_i)$ can only exist if the edge $(v_m, v_{i-1})$ exists. Furthermore, if the edge $(v_i, v_k)$ exists then the edges $(v_{i+1}, v_k),\cdots, (v_{k-1}, v_k)$ exist.
\end{theorem}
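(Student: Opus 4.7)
The plan is to derive both assertions directly from the edge criterion of Definition 1.1, namely that $(v_a, v_b) \in E(J_n(1))$ with $a < b$ holds if and only if $2a - d^-(v_a) \geq b$. The first assertion is essentially a one-line inequality: existence of $(v_m, v_i)$ gives $2m - d^-(v_m) \geq i > i - 1$, so provided $i \geq m + 2$ (so that $v_{i-1}$ is a legitimate vertex distinct from $v_m$), Definition 1.1 immediately yields $(v_m, v_{i-1}) \in E(J_n(1))$.

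For the second assertion, the heart of the argument is a one-step monotonicity lemma: $d^-(v_{i+1}) \leq d^-(v_i) + 1$. I would prove it by noting that any $v_j$ with $j < i$ that is a tail of $v_{i+1}$ satisfies $2j - d^-(v_j) \geq i + 1 > i$, so the same criterion makes it a tail of $v_i$. Hence the set of tails of $v_{i+1}$ lying in $\{v_1, \ldots, v_{i-1}\}$ is contained in the set of tails of $v_i$, and the in-degree can grow by at most one from $v_i$ to $v_{i+1}$, the only possibly new contributor being $v_i$ itself.

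Given this monotonicity, assume $(v_i, v_k)$ exists. Then $2(i+1) - d^-(v_{i+1}) \geq 2i + 2 - (d^-(v_i) + 1) = 2i - d^-(v_i) + 1 \geq k + 1 > k$, so $(v_{i+1}, v_k)$ exists. A straightforward induction on $\ell = i+1, i+2, \ldots, k-1$ gives the full conclusion.

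The main (and modest) obstacle is the in-degree monotonicity $d^-(v_{i+1}) \leq d^-(v_i) + 1$; once it is in hand everything else unwinds mechanically from Definition 1.1. The theorem may also be read simply as making precise the third fundamental property of Jaco graphs spelled out in the introduction, which already asserts that the tails of arcs to any given head form a contiguous block of indices.
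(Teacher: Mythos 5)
Your proof is correct, and it is tighter than the paper's own argument, though aimed at the same target. For the first assertion both you and the paper reduce to Definition 1.1, but where you apply the inequality criterion directly (if $2m-d^-(v_m)\geq i$ then certainly $2m-d^-(v_m)\geq i-1$, so $(v_m,v_{i-1})$ exists whenever $i-1>m$), the paper argues procedurally, describing the exhaustive application of Definition 1.1 to $v_m$ and invoking the ``at degree'' $d^*(v_m)@v_{i-1}$. The real divergence is in the second assertion: the paper's Part 2 says that since $(v_i,v_k)$ exists we have $d^*(v_i)@v_k\leq i$, and then ``because $i+1>i$'' concludes $d^*(v_{i+1})@v_k<i+1$; the comparison between the at-degrees of $v_i$ and $v_{i+1}$ at $v_k$ that this step needs is never actually established. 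You supply exactly the missing ingredient as an explicit lemma, $d^-(v_{i+1})\leq d^-(v_i)+1$ (proved by observing that every tail of $v_{i+1}$ with index below $i$ is also a tail of $v_i$), which is equivalent to saying the ``reach'' $2i-d^-(v_i)$ strictly increases with $i$; the chain $2(i+1)-d^-(v_{i+1})\geq 2i-d^-(v_i)+1\geq k+1$ and the induction along $\ell=i+1,\dots,k-1$ then close the argument mechanically. So your route buys rigor: it turns the paper's appeal to the informal at-degree bookkeeping into a one-line monotonicity statement verifiable straight from Definition 1.1, at the small cost of having to state and prove that auxiliary lemma. The only caveats worth recording are the boundary ones you already noted: the first claim is vacuous when $i=m+1$, and each inductive step requires $\ell+1<k$ so that the criterion of Definition 1.1 applies to a pair with strictly increasing indices.
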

\begin{proof}
(Part 1): After applying definition 1.1 exhaustively to the vertex $v_{m-1}$ the vertex $v_m$ has attained $d^-(v_m).$ Applying definition 1.1 exhaustively to vertex $v_m$ proceeds by linking the edges $(v_m, v_{m+1}), (v_i, v_{m+2}),\cdots$ in such a way as to attain $d(v_m)=\max(abs(\min(d(v_m)))\leq m.$ So after linking the edge $(v_m, v_{i-1})$ and, if and only if $d^*(v_m)@v_{i-1}<m,$ can the edge $(v_m, v_i)$ be linked.\\ \\
(Part 2): If the edge $(v_i, v_k)$ exists then $d^*(v_i)@v_k\leq i.$ So because $i+1>i$ it follows that $d^*(v_{i+1})@v_k<i+1.$ Hence, by definition 1.1 the edge $(v_{i+1}, v_k)$ exists.
\end{proof}
\begin{lemma}
The vertex $v_i$ is the prime Jaconian vertex of a Jaco Graph $J_n(1),$ if and only $d(v_{l})\leq d(v_i)=i$ for $l=i+1,i+2,\cdots,n.$
\end{lemma}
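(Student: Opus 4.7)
The plan is to split the biconditional and use Property 1 (which forces $d(v_m) = m$ for all $m \leq i$ when $v_i$ is the prime Jaconian vertex) together with the trivial bound $d(v_j) \leq j$ that holds in every $J_n(1)$.

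For the forward implication, assume $v_i$ is the prime Jaconian vertex. First I would invoke Property 1 to conclude $d(v_i) = i$, which simultaneously gives the equality in the statement. Then by Definition 1.3/1.4 the value $d(v_i) = i$ equals $\Delta(J_n(1))$, so every other vertex (in particular every $v_l$ with $l > i$) satisfies $d(v_l) \leq \Delta(J_n(1)) = i = d(v_i)$, delivering the required inequality.

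For the converse, assume $d(v_l) \leq d(v_i) = i$ for all $l \in \{i+1, \ldots, n\}$. Combining this hypothesis with the standing bound $d(v_j) \leq j$ for every $j$, I would observe that for $j < i$ one has $d(v_j) \leq j < i = d(v_i)$, while for $j > i$ the hypothesis gives $d(v_j) \leq i$. Hence $\Delta(J_n(1)) = i$ is attained at $v_i$ and at no vertex with smaller index, so by Definition 1.4 the vertex $v_i$ is the prime Jaconian vertex. Lemma 1.1 could also be cited here as a shortcut: since $d(v_i) = i$ is the smallest index with this property and the edge $(v_i, v_n)$ is forced (otherwise the Hope-subgraph argument used throughout the paper would yield a higher-degree successor, contradicting $d(v_l) \leq i$), Lemma 1.1 directly identifies $v_i$ as the prime Jaconian vertex.

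The proof is essentially a bookkeeping exercise; no step presents a real obstacle. The only subtlety worth flagging is making sure the converse truly uses the universal bound $d(v_j) \leq j$ to rule out any $v_j$ with $j < i$ being Jaconian, since the hypothesis itself only controls indices strictly greater than $i$. Once that observation is made explicit, both directions follow in a couple of lines from the definitions already assembled.
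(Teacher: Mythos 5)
Your proposal is correct and follows essentially the same route as the paper: the forward direction is just the maximality in Definition 1.3/1.4 (with Property 1 supplying $d(v_i)=i$), and the converse rules out lower-indexed vertices before applying the definition of the prime Jaconian vertex. If anything, your converse is slightly cleaner than the paper's, since you use the universal bound $d(v_j)\leq j$ directly rather than citing Property 1 (whose hypothesis formally presupposes that $v_i$ is already the prime Jaconian vertex), but the underlying argument is the same.
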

\begin{proof}
Let the vertex $v_i$ be the prime Jaconian vertex of the Jaco Graph $J_n(1),~n\in\Bbb{N}.$ If for any $v_{l}, l=i+1,i+2,\cdots,n.$ we have $d(v_{l})>d(v_i)$ then $v_i$ cannot be the prime Jaconian vertex of $J_n(1)$ as it then, contradicts definition 1.3. \\ \\
Conversely: If $d(v_{l}) \leq d(v_i)=i$ for $l=i+1,i+2,\cdots,n,$ then if follows from definition 1.2 and 1.3 as well as from property 1, $(d(v_i)>d(v_m)=m$ for all $m\in\{1,2,3,\cdots,i -1\}),$ that vertex $v_i$ is the prime Jaconian vertex.
\end{proof}
\begin{theorem}
If for the Jaco Graph $J_n(1),$ we have $\Delta(J_n(1))= k,$ then the out-degrees of the vertices $v_{k+1}, v_{k+2}, v_{k+3},\cdots, v_n$ are respectively, $\lceil d^+(v_{k+1}) \rceil= (n-k-1),\lceil d^+(v_{k+2}) \rceil= (n-k-2),\cdots, \lceil d^+(v_{n-1}) \rceil =1, \lceil d^+(v_n) \rceil=0.$
\end{theorem}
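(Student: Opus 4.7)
The plan is to reduce the claim to Definition 1.5 (the definition of the Hope subgraph) combined with the elementary fact that every arc in a Jaco Graph points from a lower-indexed vertex to a higher-indexed one, so that all out-degrees can be read off directly.

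The first step is to pinpoint the prime Jaconian vertex of $J_n(1)$. By hypothesis $\Delta(J_n(1)) = k$; if $v_i$ denotes the prime Jaconian vertex, then $d(v_i) = \Delta(J_n(1)) = k$, while Property 1 forces $d(v_m) = m$ for every $m \leq i$. Specialising to $m = i$ gives $i = d(v_i) = k$, so the prime Jaconian vertex is $v_k$ itself. This small calibration is the only place where the hypothesis on $\Delta$ is really used.

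With $v_k$ identified, Definition 1.5 tells us that $\Bbb{H}(J_n(1))$ is the complete subgraph on $\{v_{k+1}, v_{k+2}, \ldots, v_n\}$. Interpreted in the directed setting of $J_n(1)$, this means that for all $k+1 \leq p < q \leq n$ the arc $(v_p, v_q)$ belongs to $E(J_n(1))$. Now fix $j \in \{1, 2, \ldots, n-k\}$ and examine the out-arcs of $v_{k+j}$. By Definition 1.1, every arc with tail $v_{k+j}$ has its head strictly higher-indexed, and no vertex $v_t$ with $t > n$ exists in $J_n(1)$; hence the only possible heads are $v_{k+j+1}, \ldots, v_n$. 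All $n - (k+j)$ of these arcs are furnished by the completeness of the Hope subgraph, yielding $d^+(v_{k+j}) = n - k - j$. Specialising to $j = 1, 2, \ldots, n-k$ recovers the sequence $(n-k-1), (n-k-2), \ldots, 1, 0$ claimed in the statement (the ceiling signs in the conclusion are immaterial since each out-degree is already an integer).

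There is no genuinely hard step in this proof; it is essentially a bookkeeping of Definitions 1.1 and 1.5. The only point that could trip one up is the initial identification of the prime Jaconian vertex with $v_k$, which is not quite automatic from $\Delta(J_n(1)) = k$ and needs Property 1 to pin the index. Once that is in hand, the completeness of the Hope subgraph immediately delivers every out-degree.
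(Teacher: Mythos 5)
Your proposal is correct and follows essentially the same route as the paper: identify the prime Jaconian vertex as $v_k$ (the paper asserts this implicitly, you justify it via Property 1), then read off each out-degree from the completeness of the Hope subgraph on $v_{k+1},\ldots,v_n$ together with the fact that no higher-indexed heads exist in $J_n(1)$. Your version is, if anything, slightly more explicit about why the prime Jaconian vertex has index exactly $k$ and why the heads of out-arcs are confined to the Hope subgraph.
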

\begin{proof}
From definition 1.4 we have that with $v_k$ the prime Jaconian vertex, the Hope subgraph $\Bbb{H}(J_n(1)),$ on vertices $v_{k+1}, v_{k+2}, v_{j+3},\cdots,v_n$ is a complete graph. On applying definition 1.1 exhaustively to vertex $v_{k+1}$ it is clearly possible to link $n-(k+1)$ edges hence, $\lceil d^+(v_{k+1}) \rceil= n-(k+1)= n-k-1.$\\ \\
Furthermore, it follows from definition 1.4 that the subgraph on vertices $v_{k+2}, v_{k+3}, v_{j+4},\cdots,v_n$ is a complete graph as well. On applying definition 1.1 exhaustively to vertex $v_{k+2}$ it is clearly possible to link $n- (k + 2)$ edges hence, $\lceil d^+(v_{k+2}) \rceil = n-(k + 2)=n-k-2.$\\ \\
By repeating the immediate above to vertices $v_{k+3},\cdots, v_n$  and noting that $\lceil d^+(v_n) \rceil = n- n = 0,$ the result follows.
\end{proof}
\begin{theorem}
If for the Jaco Graph $J_n(1),$ we can express $n=7+3k,$ $k\in\{0,1,2,\cdots\},$ then:
$\Delta(J_n(1))\leq n-(3+k)$ and, edge $(v_{\Delta(J_n(1))}, v_n)$ exists.
\end{theorem}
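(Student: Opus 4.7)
The plan is induction on $k$. The base case $k=0$ (so $n=7$) is a direct construction via Definition 1.1: one obtains $d(v_4)=d(v_5)=4$ and $d(v_6)=d(v_7)=3$, hence $\Delta(J_7(1))=4=7-(3+0)$, and the three arcs $(v_4,v_5),(v_4,v_6),(v_4,v_7)$ forced on $v_4$ by $d^+(v_4)=3$ exhibit the required edge $(v_4,v_7)$.

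For the inductive step I would assume the statement for $n=7+3k$, with prime Jaconian vertex $v_{k^\ast}$ satisfying $k^\ast=\Delta(J_n(1))\leq n-(3+k)$ and with arc $(v_{k^\ast},v_n)$ present, and then pass to $n'=n+3=7+3(k+1)$. The target inequality is $\Delta(J_{n'}(1))\leq\Delta(J_n(1))+2$; equivalently, the three consecutive increments $\Delta(J_{n+j}(1))-\Delta(J_{n+j-1}(1))$ $(j=1,2,3)$, each constrained to $\{0,1\}$ by Corollary 1.3, must sum to at most $2$. To extract the forced stationary step, I would combine Theorem 2.14 (which records every out-degree of the Hope subgraph of $J_n(1)$) with Steps 2 and 5 of the Fisher Algorithm to compute the triple $d^-(v_{n+1}), d^-(v_{n+2}), d^-(v_{n+3})$; Morrie's Theorem (Theorem 2.3) or Proposition 2.7 then converts the relevant in-degree equality into a repeated Jaconian singleton, which is the zero increment that is needed. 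Part (b) is then immediate: writing $v_{k^{\ast\ast}}$ for the new prime Jaconian vertex, Theorem 2.14 applied to $J_{n'}(1)$ gives $d^+(v_{k^{\ast\ast}+1})=n'-k^{\ast\ast}-1$, so $v_{k^{\ast\ast}}$ is joined to every vertex of its Hope subgraph (Definition 1.5), in particular to $v_{n'}$.

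The principal obstacle is tracking exactly which arcs from below $v_{k^\ast}$ continue to contribute to $d^-(v_{n+j})$ for $j=1,2,3$, because this is what decides whether the resulting in-degree triple falls into Morrie's case (a) or case (b); the arithmetic progression $n\equiv 1\pmod 3$ starting at $n=7$ is precisely what forces case (b), and thereby the stationary step. I expect the cleanest route is to strengthen the inductive hypothesis so that it also fixes $d^-(v_n)=3+k$ (which equals $d_{J_n(1)}(v_n)$ by Property 3), making the stationary-step argument self-sustaining along the induction; the claim $\Delta(J_n(1))\leq n-(3+k)$ then follows from Theorem 2.11(b) (or its non-unique analogue implicit in Morrie's Theorem), and the edge $(v_{\Delta(J_n(1))},v_n)$ from Theorem 2.11(a) combined with the Hope-subgraph structure.
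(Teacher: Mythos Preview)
Your induction architecture matches the paper's: induction on $k$, base case $n=7$ by direct computation, and inductive target $\Delta(J_{n+3}(1))\leq\Delta(J_n(1))+2$. Where you diverge is in how that increment bound is justified. The paper disposes of it in one line: citing only $\Delta(J_\ell(1))\leq\ell-1$ for all $\ell$, it asserts $\Delta(J_n(1))-\Delta(J_m(1))<3$ and then chains this with the inductive hypothesis to reach $\Delta(J_n(1))\leq n-(3+(k+1))$. There is no appeal to Morrie's Theorem, no Fisher-step bookkeeping of $d^-(v_{n+j})$, and no strengthened hypothesis. Your route through Theorem~2.14, Theorem~2.3/Proposition~2.7, and the auxiliary invariant $d^-(v_n)=3+k$ is considerably heavier machinery, but it is attempting to supply an argument where the paper's one-liner is, on its face, a non sequitur (the bound $\Delta(J_\ell(1))\leq\ell-1$ does not by itself force the three-step increment strictly below~$3$). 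So what your approach buys is rigor that the paper's own proof does not visibly provide. Conversely, the paper's inductive step says nothing at all about the edge-existence clause; your treatment of part~(b) via the Hope-subgraph structure is therefore an addition rather than a deviation.

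One small slip: your parenthetical ``which equals $d_{J_n(1)}(v_n)$ by Property~3'' misreads Property~3, which identifies $d^-(v_k)$ with the \emph{degree} $d(v_k)$ in the underlying $J_k(1)$, not with the shortest-path distance $d_{J_n(1)}(v_1,v_n)$.
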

\begin{proof}
For $k=0$ it follows from the Fisher Algorithm that $\Delta (J_7(1))\leq7–(3+0).$ Furthermore, since $4 + d^+(v_4)=7$ the edge $(v_4, v_7)$ exists.\\ \\
Assume the result holds for $m=7+3k,~k>0,~k\in\{1, 2,\cdots\}$ so for the Jaco Graph $J_m(1)$ we have $\Delta(J_m(1))\leq m-(3 + k)$ and, edge $(v_{\Delta(J)}, v_m)$ exists.\\ \\
Now consider the Jaco Graph on $n$ vertices with, $n = 7 + 3(k + 1).$ Noting that $\Delta(J_\ell(1)) \leq \ell-1, \forall \ell \in \Bbb N,$ it follows that $\Delta(J_n(1))- \Delta(J_m(1))<7+3(k+1)-(7+3k)=3$ hence,  $\Delta(J_n(1))<3+ \Delta(J_m(1))<3+m-(3+k)=n-(3 +k).$ So it follows that $\Delta(J_n(1))\leq n-(3+k)-1=n-(3+(k + 1)).$
\end{proof}
\section{Fibonaccian-Zeckendorf Result}
\begin{theorem}
(Bettina's Theorem): Let $\Bbb{F} = \{f_0, f_1,f_2,f_3, ...\}$ be the set of Fibonacci numbers and let $n=f_{i_1} + f_{i_2} + ... + f_{i_r}, n\in \Bbb N$ be the Zeckendorf representation of $n.$ Then 
\begin{center}
$d^+(v_n) = f_{i_1-1} + f_{i_2-1} + ... +f_{i_r-1}.$
\end{center}
\end{theorem}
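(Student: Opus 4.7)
The plan is to prove Bettina's Theorem by strong induction on $n$, driven by the Fisher Algorithm recursion
\[
d^+(v_n) \;=\; n - d^-(v_n) \;=\; 1 + \Delta(J_{n-1}(1)),
\]
which follows directly from Step 2, since $d^-(v_n) = (n-1) - \Delta(J_{n-1}(1))$. The theorem thus reduces to understanding how $\Delta(J_N(1))$ grows as $N$ passes through Fibonacci values, with one expecting the Zeckendorf summands of $n$ to correspond exactly to the jumps of $\Delta$. I would first verify the theorem by hand for small $n$ (through $n = f_5 = 8$, say) to secure the base case.

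The technical heart would be the following structural claim, proved as a lemma: for every $k \geq 2$ and every $0 \leq j < f_{k-1}$,
\[
\Delta(J_{f_k + j}(1)) \;=\; f_{k-1} + \Delta(J_j(1)),
\]
with the convention $\Delta(J_0(1)) := 0$. The anchor case $j = 0$, i.e.\ $\Delta(J_{f_k}(1)) = f_{k-1}$ with unique prime Jaconian vertex $v_{f_{k-1}}$, is handled by a sub-induction on $k$: assuming the corresponding fact for $k-1$ with unique prime Jaconian vertex $v_{f_{k-2}}$, one applies Theorem 2.9 with $n := f_{k-1}$, so that $n + \Delta(J_n(1)) = f_{k-1} + f_{k-2} = f_k$ by the Fibonacci recurrence; Case 1 of the proof of Theorem 2.9 (triggered by the unique-Jaconian hypothesis, which in turn is forced by Corollary 2.5 together with Proposition 2.7) returns $\Delta(J_{f_k}(1)) = f_{k-1}$. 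For the propagation $j \mapsto j+1$ inside a Fibonacci interval, I would align the Fisher Algorithm rows of $J_j(1)$ and $J_{f_k + j}(1)$: Theorem 2.13 applied with its ``$k$'' replaced by $f_{k-1}$ forces the anchor vertex $v_{f_{k-1}}$ to contribute a constant offset of $f_{k-1}$ to every in-degree $d^-(v_{f_k + \ell})$, while the residual in-degree increments reproduce the Jaco construction of $J_{j+1}(1)$ verbatim.

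Granted the structural claim, the theorem proper follows in one induction step. Let $n$ have Zeckendorf representation $n = f_{i_1} + f_{i_2} + \cdots + f_{i_r}$ with $i_1 > i_2 > \cdots$ and $i_s - i_{s+1} \geq 2$; set $m := n - f_{i_1}$, so that $0 \leq m < f_{i_1 - 1}$ precisely because of the non-consecutiveness gap. The structural claim applied at index $n-1$ yields $\Delta(J_{n-1}(1)) = f_{i_1 - 1} + \Delta(J_{m-1}(1))$ (with the pure-Fibonacci case $m = 0$ handled by the anchor lemma directly), and hence
\[
d^+(v_n) \;=\; 1 + \Delta(J_{n-1}(1)) \;=\; f_{i_1 - 1} + d^+(v_m).
\]
The inductive hypothesis applied to $m$, whose Zeckendorf representation is $f_{i_2} + \cdots + f_{i_r}$, closes the induction.

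The main obstacle is the propagation step of the structural claim: it requires keeping two Fisher Algorithm runs in lockstep across a Fibonacci boundary, and distinguishing the two outcomes of Theorem 2.9 (themselves controlled by cases (a) and (b) of Morrie's Theorem, which record whether the anchor Jaconian vertex remains unique or acquires successors) is exactly where the non-consecutiveness gap $i_s - i_{s+1} \geq 2$ of the Zeckendorf representation plays the decisive role, preventing the anchor cascade from colliding with the residual cascade one index too early.
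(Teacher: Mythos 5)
Your reduction $d^+(v_n)=1+\Delta(J_{n-1}(1))$ is correct, and your anchor case $\Delta(J_{f_k}(1))=f_{k-1}$ with unique Jaconian vertex $v_{f_{k-1}}$ (via Theorem 2.9 and the Fibonacci recurrence) is sound --- it is essentially the induction the paper runs for Proposition 3.2(a),(b). The genuine gap is the propagation step of your structural claim, $\Delta(J_{f_k+j}(1))=f_{k-1}+\Delta(J_j(1))$ for $0\leq j<f_{k-1}$. Via $\Delta(J_N(1))=d^+(v_{N+1})-1$, that claim \emph{is} Bettina's Theorem re-indexed over the interval $(f_k,f_{k+1}]$, so it carries the entire content of the result; you name it as ``the main obstacle'' but never actually prove it, and the sketch offered is not yet an argument. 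The terminal-segment property of in-neighbourhoods (Theorem 2.12) does not by itself keep two Fisher Algorithm runs in lockstep across a Fibonacci boundary, and the stated offset is numerically wrong: the in-degrees satisfy $d^-(v_{f_k+\ell})=d^-(v_\ell)+f_{k-2}$, not $+f_{k-1}$ (for $k=6$ one has $d^-(v_9)=3=d^-(v_1)+f_4$); it is the out-degrees that shift by $f_{k-1}$. Until this self-similarity lemma is established independently, the proof is circular in effect. There is also a secondary circularity risk in the anchor case: the uniqueness of the Jaconian vertex at Fibonacci indices is obtained in the paper \emph{from} Bettina's Theorem ($d^+(v_{r+1})=d^+(v_r)+1$), so your sub-induction must carry that uniqueness along explicitly rather than import it from Corollary 2.6.

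The paper's own proof avoids all interval bookkeeping and is much shorter: it inducts on $n$ directly through the Zeckendorf representation. When $i_r\geq 3$ it sets $k=f_{i_1-1}+\cdots+f_{i_r-1}$, observes this is again a Zeckendorf representation, applies the inductive hypothesis to $k$, and uses the summand-wise recurrence $f_{i_j-1}+f_{i_j-2}=f_{i_j}$ to conclude $k+d^+(v_k)=n$, hence $d^+(v_n)=k$; the case $i_r=2$ is handled by comparing $n$ with $n-1$. The only structural input is the single fact that $k+d^+(v_k)=n$ forces $d^+(v_n)=k$ (i.e.\ $v_k$ is the prime Jaconian vertex of $J_n(1)$, so $d^-(v_n)=n-k$). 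If you want to rescue your route, the cleanest repair is to prove $d^+(v_{f_k+i})=f_{k-1}+d^+(v_i)$ by exactly this in-neighbour identity rather than by tracking the whole algorithm --- at which point you will have reproduced the paper's argument in different clothing.
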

\begin{proof}
Through induction we have that first of all, $1=f_2$ and $d^+(v_1) =1=f_1.$ Let $2 \leq n = f_{i_1} + f_{i_2} + ... + f_{i_r}$ and let $k = f_{i_1-1} + f_{i_2-1} + ... + f_{i_r-1}.$ If $i_r \geq 3,$ then $k = f_{i_1-1} + f_{i_2-1} + ... + f_{i_r-1}$ is the Zeckendorf representation of $k$, such that induction yields $d^+(v_k) = k = f_{i_1-2} + f_{i_2-2} + ... + f_{i_r-2}.$ Since $k + d^+(v_k)  = f_{i_1-1} + f_{i_1-2} + f_{i_2-1} + f{i_2-2} + ... f_{i_r-1} + f_{i_r-2} = f_{i_1} + f_{i_2} + ... f_{i_r} = n,$ we have $d^+(v_n) = k.$ \\ \\
Finally consider $n=f_{i_1} + f_{i_2} + ... + f_{i_r}, i_r =2.$ Note that $n > 1$ implies that $i_{r-1} \geq 4$ and that the Zeckendorf representation of $n-1$ given by $n-1 = f_{i_1} + f_{i_2} + ... + f_{i_{r-1}}.$ Let $k= d^+(v_{n-1}).$ Through induction we have that, $k = f_{i_1-1} + f_{i_2-1} + ... + f_{i_{r-1}-1},$ and since $i_{r-1} \leq 4,$ this is the Zeckendorf representation of $k.$ Accordingly, $d(v_k) = f_{i_1-2} + f_{i_2-2}+ ... + f_{i_{r-1}-2},$ and $k+ d^+(v_k) =  f_{i_1-1} + f_{i_1-2} + f_{i_2-1} + f{i_2-2} + ... f_{i_{r-1}-1} + f_{i_{r-1}-2} = n-1.$ It follows that $d^+(v_n) > k = d^+(v_{n-1}).$ Hence, it follows that $d^+(v_n) = k+1 = (f_{i_1-1} + f_{i_2-1} + ... + f_{i_{r-1}-1}) + f_1 = f_{i_1-1} + f_{i_2-1} + ... + f_{i_r-1}.$
\end{proof}
\begin{proposition} 
For Fibonacci numbers $t= f_{n-1},~h=f_n$ and $l=f_m,~t\geq3,~h\geq3,~l\geq3$ we have:\\ \\
\noindent (a) $\Delta(J_h(1))=t=f_{n-1},$ \\
(b) $\Bbb{J}(J_h(1))=\{v_t\},$ \\
(c) $\Delta(J_{h+l}(1))=\Delta(J_h(1)) + \Delta(J_l(1)),$ \\
(d) $\Bbb J(J_{h+ l}(1)) = \{v_{\Delta(J_h(1)) + \Delta(J_l(1))}\}.$
\end{proposition}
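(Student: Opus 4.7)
The plan is to deduce all four claims from Bettina's Theorem (Theorem 3.1) together with Corollary 2.6, which identifies a unique Jaconian vertex whenever $k+d^+(v_k)=i$ and $(k+1)+d^+(v_{k+1})>i+1$.

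For (a) and (b), I would first apply Bettina's Theorem to $v_t$: since $t=f_{n-1}$ is itself a Fibonacci number, its Zeckendorf representation is the single term $f_{n-1}$, so $d^+(v_t)=f_{n-2}$ and therefore $t+d^+(v_t)=f_{n-1}+f_{n-2}=f_n=h$. Next I would apply Bettina's Theorem to $v_{t+1}$: because $h\geq 3$ forces $n\geq 4$, the integer $t+1$ has Zeckendorf representation $f_{n-1}+f_2$ (the indices $n-1$ and $2$ being non-consecutive), giving $d^+(v_{t+1})=f_{n-2}+f_1=f_{n-2}+1$ and hence $(t+1)+d^+(v_{t+1})=h+2>h+1$. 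Corollary 2.6 then yields $\Bbb{J}(J_h(1))=\{v_t\}$, proving (b), and Property 1 immediately upgrades this to $\Delta(J_h(1))=d(v_t)=t$, proving (a).

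For (c) and (d), I would set $k=f_{n-1}+f_{m-1}=\Delta(J_h(1))+\Delta(J_l(1))$ and again verify the two hypotheses of Corollary 2.6, this time with $i=h+l$. Using the telescoping identity $f_{i_j}+f_{i_j-1}=f_{i_j+1}$ applied termwise to any Zeckendorf expansion, Bettina's Theorem yields $k+d^+(v_k)=f_n+f_m=h+l$ once $k$ has been put into Zeckendorf form. A parallel computation on $v_{k+1}$, which requires appending $f_2$ to (or otherwise absorbing a carry in) the expansion of $k$, shows $(k+1)+d^+(v_{k+1})=(h+l)+2>(h+l)+1$. Invoking Corollary 2.6 gives $\Bbb{J}(J_{h+l}(1))=\{v_k\}$, which is (d), and Property 1 then forces $\Delta(J_{h+l}(1))=k$, which is (c).

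The main obstacle lies in the Zeckendorf rewriting of $k=f_{n-1}+f_{m-1}$: if $n\geq m+2$ then $k$ is already Zeckendorf, but the cases $n=m+1$ and $n=m$ require the recurrence (the latter via $2f_{n-1}=f_n+f_{n-3}$) before Bettina's Theorem can be applied. In every subcase the Fibonacci recurrence forces $k+d^+(v_k)=h+l$, but I must separately verify that the expansion of $k+1$ also remains Zeckendorf, which is precisely where the lower bounds $t,h,l\geq 3$ are used. Once this bookkeeping is complete, Corollary 2.6 closes both parts in a single stroke.
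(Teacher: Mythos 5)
Your proposal is correct in outline but takes a genuinely different --- and in one respect stronger --- route than the paper. For (a) and (b) the two arguments essentially coincide: the paper inducts along Fibonacci indices to get $d^+(v_{f_k})=f_{k-1}$ and then applies Corollary 2.6 via $d^+(v_{r+1})=d^+(v_r)+1$, while you extract the same two facts directly from Bettina's Theorem applied to $v_t$ and $v_{t+1}$; since Bettina's Theorem already packages that induction, the difference is cosmetic. One small correction there: the non-consecutivity of the indices in $t+1=f_{n-1}+f_2$ needs $n-1\geq 4$, which follows from $t=f_{n-1}\geq 3$, not from $h\geq 3$ (the latter only gives $n\geq 4$, and $f_3+f_2$ is \emph{not} a Zeckendorf representation). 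The real divergence is in (c) and (d): the paper simply cites Theorem 2.10, which is explicitly labelled as conjectured and reappears in the list of open problems, so the paper's own proof of (c) and (d) is conditional; moreover Theorem 2.10 speaks only of $\Delta$, so even granting it, (d) does not obviously follow. Your plan --- put $k=f_{n-1}+f_{m-1}$ into Zeckendorf form, verify $k+d^+(v_k)=h+l$ and $(k+1)+d^+(v_{k+1})>h+l+1$, and invoke Corollary 2.6 --- is unconditional and, as far as I can check, the case analysis you defer (already Zeckendorf when $|n-m|\geq 2$; $f_{n-1}+f_{n-2}=f_n$ when $m=n-1$; $2f_{n-1}=f_n+f_{n-3}$ when $m=n$; and the absorption of the $+1$ into $f_{m-1}+f_2$ when $m-1=3$) does close in every subcase via the Fibonacci recurrence. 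You should carry that bookkeeping out explicitly rather than leaving it as a stated obstacle, but the skeleton is sound and buys you a proof of (c) and (d) that the paper does not actually have.
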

\begin{proof}
From the Fisher Algorithm it follows that for $s=f_2,~u=f_3,w=f_4$ we have:\\
$f_2=1,~d^+(v_s)=1,~\Delta(J_s(1))=0$ and $\Bbb{J}(J_1(1)) = \{v_1\};$\\
$f_3=2,~d^+(v_u)=1,~\Delta(J_u(1))=1$ and $\Bbb{J}(J_2(1))=\{v_1, v_2\};$\\
$f_4=3,~d^+(v_w)=2,~\Delta(J_w(1))=2$ and $\Bbb{J}(J_3(1))=\{v_2\}.$\\ \\
So assume for $i=f_{k-1}$ and $r=f_k$ we have $d^+(v_r)=f_{k-1}, \Delta(J_r(1))=f_{k-1}$ and $\Bbb{J}(J_r(1))=\{v_i\}.$ So for $q=f_{k+1},$ we have that:\\
\begin{center}
$f_{k+1}=f_k+f_{k-1}=f_k+d^+(v_r)\Rightarrow v_r$
\end{center}
is the prime Jaconian vertex of $J_q(1)$ so $\Delta(J_q(1))=r=f_k.$ So result $(a)$ namely, $\Delta(J_h(1))=t=f_{n-1}$ holds in general.\\ \\
Bettina’s Theorem yields $d^+(v_{r+1})=d^+(v_r)+1.$ So we have $(f_k+1)+d^+(v_{r+1})=(f_k+1)+d^+(v_r)+1=(f_k+d^+(v_r))+2>f_{k+1}+1.$ From Corollary 2.6 it follows that $\Bbb{J}(J_q(1))=\{v_r\}.$ So result $(b)$ namely, $\Bbb{J}(J_h(1))=\{v_t\}$  holds in general.\\ \\
The result $\Delta(J_{h+l}(1))=\Delta(J_h(1)) + \Delta(J_l(1))$ follows from $(b)$ read together with Theorem 2.10. \\ \\
The result $\Bbb J(J_{h+ l}(1)) = \{v_{\Delta(J_h(1)) + \Delta(J_l(1))}\}$ follows from $(c)$ read together with Theorem 2.10.
\end{proof}
\noindent [Open problem: Note that for $P_0 =\{n\in \Bbb N|1 \leq n \leq 44\}$ we have, $\Delta(J_n(1)) = 3k$ if $n = 5k.$ Then for $P_1 = \{n \in \Bbb N|45 \leq n \leq 99\}$ we have, $\Delta(J_n(1)) =3k+1$ if $n=5k.$ Then for $P_2 = \{n \in \Bbb N|100\leq n \leq ????\}$ we have, $\Delta(J_n(1)) = 3k + 2$ if $n = 5k$ and seemingly so on $\cdots.$ Find a partitioning $\Bbb P = \cup_{i\in \Bbb N}P_i$ to close the result.]\\ 
\noindent [Open problem: Prove Lemma (conjectured) 2.2.]\\
\noindent [Open problem: Prove Theorem (conjectured) 2.10.] \\
\noindent [Open problem: Determine the number of spanning trees of $J_n(1)$.] \\ \\
\textbf{\emph {Open access:}} This paper is distributed under the terms of the Creative Commons Attribution License which permits any use, distribution and reproduction in any medium, provided the original author(s) and the source are credited.   

\noindent References \\ \\
$[1]$  Bondy, J.A., Murty, U.S.R., \emph {Graph Theory with Applications,} Macmillan Press, London, 1976. \\ 
$[2]$ Zeckendorf, E., \emph {Repr\'esentation des nombres naturels par une somme de nombres de Fibonacci ou de nombres de Lucas,} Bulletin de la Soci\'et\'e Royale des Sciences de Li\`ege, Vol 41, (1972): pp 179-182. \\  \\
\noindent Acknowledgement will be given to colleagues for preliminary peer review and other contributions on the content of this paper during the preprint arXiv publication term:\footnote{} \\ \\
\noindent {\scriptsize [Remark: The concept of Jaco Graphs followed from a dream during the night of 10/11 January 2013 which was the first dream Kokkie could recall about his daddy after his daddy passed away in the peaceful morning hours of 24 May 2012, shortly before the awakening of Bob Dylan, celebrating Dylan's 71st birthday]}\\ 

\end{document}